\documentclass[12pt]{amsart}

\usepackage{fullpage}
\usepackage{mathtools}
\usepackage{amssymb,amsmath,amsthm,amscd,mathrsfs,graphicx}
\usepackage[dvipsnames]{xcolor}
\usepackage{bm}
\usepackage{dsfont}
\usepackage{enumerate}
\usepackage{epsfig}
\usepackage{float, graphicx}
\usepackage{latexsym, amsxtra}
\usepackage{pdfpages}
\usepackage{multicol}
\usepackage[normalem]{ulem}
\usepackage{psfrag}
\usepackage{stmaryrd}
\usepackage{tikz}
\usepackage[T1]{fontenc}
\usepackage{url}
\usepackage{verbatim}
\usepackage{xy}
\usepackage{indentfirst}
\usepackage{tikz-cd}
\tikzset{%
    symbol/.style={%
        ,draw=none
        ,every to/.append style={%
            edge node={node [sloped, allow upside down, auto=false]{$#1$}}}
    }
}
\usepackage{url}
\usepackage{longtable}

\usepackage[
  colorlinks=true,
  linkcolor=red,
  urlcolor=blue,
  citecolor=blue
]{hyperref}

\flushbottom

\makeatletter
\def\thm@space@setup{%
  \thm@preskip=2ex \thm@postskip=2ex
}
\makeatother

\makeatletter

\newcommand{\Rmnum}[1]{\expandafter\@slowromancap\romannumeral #1@}
\makeatother

\oddsidemargin=0in
\evensidemargin=0in
\textwidth=6.5in
\setlength{\unitlength}{1cm}
\setlength{\parindent}{0.6cm}

\numberwithin{equation}{section}
\theoremstyle{plain}

\newtheorem{thm}{Theorem~}[section] 
\newtheorem{lem}[thm]{Lemma~}

\newtheorem{prop}[thm]{Proposition~}

\newtheorem{cor}[thm]{Corollary~}

\theoremstyle{remark}
\newtheorem{rmk}[thm]{Remark~}
\newtheorem{ex}[thm]{Example~}

\theoremstyle{definition}

\newenvironment{thmbis}[1]
  {%
   \addtocounter{thm}{-1}%
   \begin{thm}}
  {\end{thm}}

\newcommand{\CC}{\mathbb{C}}
\newcommand{\ZZ}{\mathbb{Z}}

\newcommand{\PP}{\mathbb{P}}

\newcommand\PGL{\mathrm{PGL}}

\newcommand\rank{\mathrm{rank}}

\newcommand\SL{\mathrm{SL}}

\newcommand\diag{\mathrm{diag}}
\newcommand\GL{\mathrm{GL}}

\title{Several Sufficient conditions for projective hypersurfaces to be GIT (semi-)stable}

 \author[X. He]{Xuancong He}
\address{Fudan University, China}
\email{xche25@m.fudan.edu.cn}

\date{}

\begin{document}
\bibliographystyle{amsalpha}

\begin{abstract}
In this paper, I present some sufficient conditions for projective hypersurfaces to be GIT (semi-)stable. These conditions will be presented in terms of dimension and degree of the hypersurfaces, dimension of the singular locus and multiplicities of the singular points. When singularities of the hypersurface are isolated and all have multiplicity 2, we can judge its stability via the ranks of Hessian matrices at these singular points. 
\end{abstract}

\maketitle

\emph{Notation}:
\begin{enumerate}
\item $V:= \CC[x_0,..,x_n]_d$ is the vector space of degree $d$ homogeneous polynomials in $n+1$ variables. We always assume $n \ge 2$ and $d \ge 3$. 
\item $f \in V$ is a homogeneous polynomial and $H = V(f)$ is the hypersurface determined by its zero set. 
\item $\vec{r} \in \ZZ^{n+1}$ is such that $r_0\ge..\ge r_n$, $\vec{r} \ne \vec{0}$ and $\Sigma_jr_j = 0$. 
\item $M_{\ge0(>0)}(\vec{r}) = \{f \in \Sigma a_{i_0,..,i_n}x_0^{i_0}..x_n^{i_n}\in \CC[x_0,..,x_n]_d|\Sigma_j r_ji_j\ge(>)0, \forall a_{i_0,..,i_n}\ne 0\}$. 
\end{enumerate}

\section{Introduction}
Geometric invariant theory (GIT) is an important tool in algebraic geometry to construct a quotient of an algebraic variety $X$ by an algebraic action of a linear algebraic group $G$. Since many moduli spaces are constructed as quotients of parameter spaces by actions that identify equivalent objects, GIT is also an important tool to construct and compactify moduli spaces. One of the most important examples is the construction of the moduli space of projective hypersurfaces. 

Let $V := \CC[x_0,..,x_n]_d$ be the vector space of degree $d$ homogeneous polynomials in $n+1$ variables. The parameter space or the Hilbert scheme of degree $d$ hypersurfaces of dimension $n-1$ is the projective space $\PP(V)$. To construct the moduli space of degree $d$ hypersurfaces of dimension $n-1$ up to projective equivalence, we need to construct a quotient of $\PP(V)$ by $\PGL(n+1)$. Ideally, we would want a map from $\PP(V)$ to a quotient (of set) $\PP(V)/\PGL(n+1)$, which is expected to be an algebraic variety. However, this would imply that inverse image of each point, which is an orbit, is closed. However, this does not happen in the case where $\PGL(n+1)$ acts on $\PP(V)$. A possible approach to solving this problem is to choose an invariant Zariski open subset as large as possible, such that the geometric quotient exists. In fact, all the points in $\PP(V)$ that have closed orbits and dimension $0$ stabilizers in $\PGL(n+1)$ will constitute an open subset that suits the need, they are called stable points, denoted $\PP(V)^{s}$. However, another problem emerges, $\PP(V)^{s}/\PGL(n+1)$ may not be projective even though $\PP(V)$ is projective. The solution is to not restrict ourselves to geometric quotients, but turn to categorical quotients which might identity different orbits to the quotient. In this point of view, we only throw away those points that cannot lie in invariant Zariski open subsets, which are called unstable points. The complement of unstable points is an open subset containing $\PP(V)^{s}$, called the semi-stable points, and denoted as $\PP(V)^{ss}$. 

It is one of the main problems in GIT to determine semi-stable and stable points. According to \cite[Proposition 4.2]{mumford1994geometric}, when $d \ge 3$, non-singular hypersurfaces are stable points. Therefore, it is a natural question to ask, for a hypersurface to remain (semi-)stable, what singularities are allowed to occur on it. However, there is no complete answer to this problem for general $(n,d)$ but for several small values (for notation, let $f \in V$ be a homogeneous polynomial and $H = V(f)$ be the hypersurface determined by the zero set of $f$. )
\begin{itemize}
    \item When $d=1$, there are no semi-stable points, see \cite[Example 7.7]{hoskins2015moduli}. When $d = 2$, $H$ is semi-stable if and only if it is smooth, and it is never stable, see \cite[Example 10.1]{dolgachev2003lectures}. 
    \item When $d = 3$, various cases where $n$ is small have been studied clearly. When $n = 2$, a cubic curve is semi-stable if and only if it has at most $A_1$ singularities, it is stable if and only if it is smooth, see \cite[Lemma 7.25]{hoskins2015moduli}; When $n=3$, a cubic surface is semi-stable if and only if it has at most $A_1$ and $A_2$ singularities, it is stable if and only if it has at most $A_1$ singularities, see \cite[Theorem 7.14, Theorem 7.20]{mukai2003introduction}. When $n = 4, 5$, the computation becomes much more complicated, see \cite{yokoyama2002stability} and \cite{laza2009moduli}. 
    \item When $(n,d) = (2,4)$, see the table on page 80 in \cite{mumford1994geometric}. When $(n,d) = (3,4)$, see \cite{shah1981degenerations}. 
\end{itemize}

In a recent paper of Thomas Mordant \cite{mordant2024note}, he provides some sufficient conditions to determine stability of general hypersurfaces, which is also the aim of my paper. 

\begin{thm}\cite[Theorem 1.1]{mordant2024note}
\label{theorem:mordantmain}
    The base field $k$ is an algebraically closed field. Let $\delta$ be the maximal multiplicity of $H$ at a point of $H(k)$, and let $s$ be the dimension of the singular locus $H_{sing}$ of $H$. 
    \begin{enumerate}
        \item If the following condition holds: 
        \[
        d \ge \delta\min(n+1,s+3) \ \ (resp.\  d>\delta\min(n+1,s+3)),
        \]
        then $H$ is semi-stable (resp. stable). 
        \item Asssume $N \ge 2$. If for every point $P \in H(k)$ where $H$ has multiplicity $\delta$, the projective tangent cone $\PP(C_PH)$ in $\PP(T_P\PP_k^n)\cong\PP_k^{n-1}$ is not the cone over some hypersurface in a projective hyperplane of $\PP_k^{n-1}$, and if the following condition holds:
        \[
        d \ge (\delta-1)\min(N+1, s+3)\ \ (resp.\ d>(\delta -1)\min(N+1,s+3)), 
        \]
        then $H$ is semi-stable (resp. stable). 
    \end{enumerate}
\end{thm}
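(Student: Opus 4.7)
The plan is to apply the Hilbert--Mumford numerical criterion. In our setting it reads: $f \in V$ is semi-stable (resp.\ stable) under $\PGL(n+1)$ if and only if for every $g \in \PGL(n+1)$ and every admissible weight vector $\vec{r}$ (as in the Notation section), one has $g \cdot f \notin M_{>0}(\vec{r})$ (resp.\ $g \cdot f \notin M_{\geq 0}(\vec{r})$). I will proceed by contrapositive: assuming $f$ is not semi-stable (resp.\ not stable), there exist a coordinate system and an admissible $\vec{r}$ with $f \in M_{>0}(\vec{r})$ (resp.\ $f \in M_{\geq 0}(\vec{r})$), and from this I aim to extract the inequality $d < \delta\min(n+1,s+3)$ (resp.\ $\leq$), contradicting the hypothesis.

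The first step is to convert the monomial condition $f \in M_{\geq 0}(\vec{r})$ into geometric data. Let $P = [0{:}\cdots{:}0{:}1]$, the coordinate point associated to the smallest weight $r_n$. Since every monomial $x_0^{i_0}\cdots x_n^{i_n}$ of $f$ satisfies $\sum_j r_j i_j \geq 0$ and $\sum_j i_j = d$, the exponent $i_n$ is bounded above, which forces $H$ to have high multiplicity $\mu_P$ at $P$; by hypothesis $\mu_P \leq \delta$. Next, partition the entries of $\vec{r}$ into maximal constant blocks, and let the trailing block (containing $r_n$) have length $s'+1$. Setting the coordinates of the higher-weight blocks to zero cuts out a linear subspace $L \cong \PP^{s'}$; the $M_{\geq 0}(\vec{r})$-condition forces all partial derivatives of order $<\mu_P$ of $f$ to vanish identically along $L$, so $L \subseteq H_{\mathrm{sing}}$ and hence $s' \leq s$.

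The combinatorial heart is then to combine these two facts. Averaging the inequality $\sum_j r_j i_j \geq 0$ across the monomials of $f$, and using $\sum_j r_j = 0$, one obtains a relation of the form $d \leq \mu_P \cdot k$, where $k$ counts the number of distinct values attained by $\vec{r}$. Since $\vec{r} \in \ZZ^{n+1}$, obviously $k \leq n+1$. On the other hand, the trailing block of $\vec{r}$ contributes $s' = k_{\text{tail}} - 1$ to $\dim H_{\mathrm{sing}}$, and a symmetric analysis at the top block together with the constraint $\sum r_j = 0$ shows $k \leq s+3$. Combining these yields $d \leq \delta \min(n+1, s+3)$, with strict inequality in the unstable case, contradicting the bound assumed in~(1). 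I would reduce first to the case where $\vec{r}$ is ``normalized'' (i.e.\ the extremal blocks are as long as possible), and handle this base case by direct computation before bootstrapping.

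For part~(2), the strategy is to refine the extremal analysis of the previous step. In the borderline case $d = (\delta-1)\min(N+1,s+3)$ with $f \in M_{\geq 0}(\vec{r})$, the averaging argument becomes sharp, and sharpness forces $\PP(C_P H)$ to be a cone over a hypersurface in a projective hyperplane of $\PP(T_P\PP^n) \cong \PP^{n-1}$: the weight structure of $\vec{r}$ literally exhibits the apex coordinate. The assumption of~(2) excludes this, permitting the replacement of $\delta$ by $\delta-1$ in the coefficient. The main obstacle is the combinatorial step linking the block structure of $\vec{r}$ to \emph{both} the multiplicity at $P$ and $\dim H_{\mathrm{sing}}$ simultaneously, since optimizing one quantity tends to hurt the other; the balancing act between these two trade-offs is precisely what produces the factor $\min(n+1,s+3)$, and this is where the proof will require the most care.
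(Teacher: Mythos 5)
This statement is quoted from Mordant's paper and is not proved here; the closest thing to an in-house proof is the argument for the analogous Theorem~\ref{theorem:main1} in Section~4, so that is what I compare you against. Your framework is the right one and matches the paper's: pass to $f\in M_{\ge 0}(\vec r)$ via Hilbert--Mumford, bound the multiplicity $\mu_P$ at $P=[0{:}\cdots{:}0{:}1]$ below using the weight inequalities, and play that off against $\mu_P\le\delta$. But the combinatorial core of your sketch contains a false step. The claimed relation $d\le \mu_P\cdot k$, with $k$ the number of distinct values of $\vec r$, fails: for $\vec r=(n,-1,\dots,-1)$ one has $k=2$, while $f=x_0^{a}h$ with $a=\lceil d/(n+1)\rceil$ lies in $M_{\ge 0}(\vec r)$ and can have $\mu_P=a$, so $d\le 2a$ is violated already for $n=2$, $d=6$. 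What the weight inequality at $P$ actually gives is $\mu_P r_0+(d-\mu_P)r_n\ge 0$, i.e.\ $d\le \mu_P(1+r_0/|r_n|)$; the entire content of the theorem is an upper bound on $r_0/|r_n|$ (or a suitable combination of the $r_j$) in terms of $\min(n,s+2)$, and your sketch does not supply one. Your proposed source of $s$-dependence --- that the trailing constant block of $\vec r$ cuts out $L\cong\PP^{s'}\subseteq H_{\mathrm{sing}}$, whence $s'\le s$ --- is unreliable ($L\subseteq H_{\mathrm{sing}}$ needs $r_0+(d-1)r_n<0$, which can fail, e.g.\ when $d=3$; cf.\ Proposition~\ref{proposition:finaltrick}) and points the wrong way: it bounds $s$ from below by a block length that can be $0$ no matter what $s$ is, so it constrains neither $k$ nor $r_0/|r_n|$. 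The auxiliary claim ``$k\le s+3$'' is likewise unsupported and appears false (a weight vector can take many distinct values while the singular locus is finite).

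The missing ingredient is the one supplied by Proposition~\ref{proposition:estimate} and Corollary~\ref{corollary:estimate}: assuming $\dim H_{\mathrm{sing}}\le s$, one shows that if certain linear inequalities in the $r_j$ failed, then an explicit subvariety $\{x_0=\cdots=x_m=f_0=\cdots=f_\ell=0\}$ would lie in $H_{\mathrm{sing}}$ with dimension $>s$ by the principal ideal theorem; the resulting inequalities ($t\ge\frac{n-s}{2}-1$, $\frac{1}{d-1}r_0+r_{n-1-s}\ge 0$, $\sum_{j=1}^{n-2-s}r_j\ge 0$) are then combined with $j'r_0+(d-j')r_n\ge 0$ and $\sum_j r_j=0$ to force the multiplicity at $P$ to be large. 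Some argument of this shape --- deriving inequalities on $\vec r$ from the hypothesis on $s$, rather than reading $s$ off the block structure of $\vec r$ --- is unavoidable, and it is absent from your proposal. Your remarks on part~(2) (sharpness forcing the projective tangent cone to be a cone) are a plausible guess at the role of that hypothesis but are not developed enough to evaluate.
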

\begin{rmk}
    Note that we must have $s \le n-1$ and thus $s+3 \le n+2$. Therefore, the lower bound is $\delta(s+3)$ or $(\delta-1)(s+3)$ most of the time unless $s$ reaches its maximal $n-1$. 
\end{rmk}

When the hypersurfaces have at most isolated singularities, i.e. $s = 0$, there are two corollaries. 
\begin{cor}\cite[1.2.2]{mordant2024note}
\label{corollary:mordant1}
    When $H$ has at most isolated singularities, if further
    \[
    n \ge 2,\ \ d\ge3\delta\ \ (resp.\ d>3\delta),
    \]
    then $H$ is semi-stable (resp. stable). 
\end{cor}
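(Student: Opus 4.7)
The plan is to derive the corollary as a direct specialization of Theorem~\ref{theorem:mordantmain}(1) to the isolated-singularity regime; no genuinely new ingredient is needed.

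First I would set $s := \dim H_{sing}$. The hypothesis that $H$ has at most isolated singularities means $s \le 0$, so $s+3 \le 3$. Combined with $n \ge 2$, i.e.\ $n+1 \ge 3$, this gives
\[
\min(n+1,\, s+3) \;=\; s+3 \;\le\; 3.
\]
In particular the numerical threshold appearing in Theorem~\ref{theorem:mordantmain}(1) satisfies $\delta\min(n+1, s+3) \le 3\delta$.

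Next I would feed this into Theorem~\ref{theorem:mordantmain}(1). The assumption $d \ge 3\delta$ (resp.\ $d > 3\delta$) then implies $d \ge \delta\min(n+1, s+3)$ (resp.\ its strict analogue), and Theorem~\ref{theorem:mordantmain}(1) yields semi-stability (resp.\ stability) of $H$, as required.

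Since the argument is a purely arithmetic reduction, I expect no real obstacle: the entire content lies in Theorem~\ref{theorem:mordantmain}, which is assumed. The only mild subtlety is the degenerate case in which $H$ is smooth, so that $H_{sing} = \emptyset$ and $s$ is either $-\infty$ or $-1$ depending on convention; this case falls outside the scope of the singular-point discussion but is already covered by the classical fact that smooth hypersurfaces of degree $d \ge 3$ are stable (\cite[Proposition~4.2]{mumford1994geometric}), so it can be dispatched separately and one may assume $s = 0$ in the main argument above.
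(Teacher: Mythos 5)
Your proposal is correct and matches the paper's (implicit) derivation: the corollary is presented as the direct specialization of Theorem~\ref{theorem:mordantmain}(1) to the case $s=0$, where $\min(n+1,s+3)=3$ for $n\ge 2$, exactly as you argue. The separate treatment of the smooth case is a harmless extra precaution.
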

\begin{rmk}
    When $H$ is singular, we must have $\delta \ge 2$, and therefore this corollary mainly applies to the case where $d \ge 6$. 
\end{rmk}
\begin{cor}\cite[1.2.3]{mordant2024note}
\label{corollar:mordant2}
    When $H$ has at most isolated singularities, and the projective tangent cones at these singularities are not the cones over some hypersurfaces in a projective hyperplane of $\PP_k^{n-1}$, if further
    \[
    n \ge 2,\ \ d\ge3(\delta - 1)\ \ (resp.\ d>3(\delta-1)),
    \]
    then $H$ is semi-stable (resp. stable). 
\end{cor}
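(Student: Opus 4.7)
The plan is to deduce this statement directly from Theorem \ref{theorem:mordantmain}(2) by specializing to isolated singularities, i.e.\ to the case $s = \dim H_{\mathrm{sing}} = 0$.

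First I would split off the degenerate case: if $H$ is smooth, the hypothesis on projective tangent cones is vacuous, and $H$ is automatically stable by \cite[Proposition 4.2]{mumford1994geometric}, using only the running assumption $d \ge 3$. So I may assume $H$ has at least one singular point, in which case $\delta \ge 2$ and the singular locus is a nonempty zero-dimensional scheme, so $s = 0$.

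Next I would verify the hypotheses of Theorem \ref{theorem:mordantmain}(2). The condition $n \ge 2$ in the corollary plays the role of the theorem's $N \ge 2$. The tangent-cone hypothesis in the corollary is required at \emph{every} singular point, which is strictly stronger than the theorem's requirement at just the points of maximal multiplicity $\delta$. Substituting $s = 0$ into the numerical condition gives
\[
(\delta - 1)\min(n+1,\, s+3) \;=\; (\delta - 1)\min(n+1,\, 3) \;=\; 3(\delta - 1),
\]
because $n \ge 2$ forces $\min(n+1, 3) = 3$. Thus the inequality $d \ge 3(\delta - 1)$ (resp.\ strict) in the corollary is exactly the numerical hypothesis $d \ge (\delta - 1)\min(n+1, s+3)$ (resp.\ strict) in the theorem, and semi-stability (resp.\ stability) follows at once.

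The argument is therefore a routine translation between the general-$s$ statement and the $s = 0$ specialization; the only mildly delicate point is the smooth case, where the convention $\delta = 1$ makes the bound $d \ge 3(\delta - 1) = 0$ vacuous, so one must invoke Mumford's classical result separately rather than trying to extract it from Theorem \ref{theorem:mordantmain} itself.
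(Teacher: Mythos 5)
Your derivation is correct and is exactly the intended one: the paper states this corollary as a citation from Mordant's work, presented as the specialization of Theorem \ref{theorem:mordantmain}(2) to isolated singularities ($s=0$, so $\min(n+1,s+3)=3$ when $n\ge 2$), which is precisely what you carry out. Your separate handling of the smooth case and your observation that the corollary's tangent-cone hypothesis subsumes the theorem's are sensible bookkeeping and do not depart from the standard argument.
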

\begin{rmk}
    When $n \ge 2$ and $\delta = 2$, the condition in the corollary is the same as requiring $H$ to have at most $A_1$ singularities. Therefore, when $H$ has at most $A_1$ singularities, and further
    \[
    d \ge 3\ \ (resp.\ d\ge 4), 
    \]
    then $H$ is semi-stable (stable). 
\end{rmk}

Although I will present results similar to \cite{mordant2024note}, it does not mean that my work here is worthless. In fact, I will give a better lower bound compared to theorem \ref{theorem:mordantmain} (1), and use examples to illustrate that when $s=0$ and $\delta=2$, this bound is sharp. Meanwhile, when $s=0$ and $\delta = 2$, a new way to determine stability of hypersurfaces is presented here, via the ranks of Hessian matrices at these singular points. 

It seems good to cite Thomas Mordant's work and then state new things after his results. However, my way of tackling this problem is a bit different and thus I choose to build up the proof from the beginning for my own convenience. 

\section{Main Results}
For convenience, we assume the base field to be $\CC$. Most arguments will apply to other base fields without change. $n$ is still the dimension of the projective space, $d$ is the degree of the hypersurface $H$, $s$ is the dimension of the singular locus $H_{sing}$ and $\delta$ is the maximal multiplicities of singularities of $H$. In the following, we always assume $n \ge 2$ and $d \ge 3$. 
\begin{thm}
\label{theorem:main1}
    When the hypersurface $H$ satisfies one of the following conditions
    \begin{itemize}
        \item $s \le n-2$ and
        \[
        \delta < \frac{d(d-2)}{(s+2)d-(s+3)}\ \ (resp.\   \delta \le \frac{d(d-2)}{(s+2)d-(s+3)})
        \]
        \item $s = n-1$ and
        \[
        \delta < \frac{d}{n+1}\ \ (resp.\ \delta\le\frac{d}{n+1})
        \]
        then $H$ is semi-stable (resp. stable). 
    \end{itemize}
\end{thm}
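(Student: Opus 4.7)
The plan is to apply the Hilbert--Mumford numerical criterion: $H$ fails to be stable (resp.\ semi-stable) iff, after a suitable linear change of coordinates, there exists a nonzero integer weight vector $\vec{r}=(r_0,\ldots,r_n)$ with $r_0\ge\cdots\ge r_n$ and $\sum_j r_j=0$ such that $f\in M_{\ge 0}(\vec{r})$ (resp.\ $f\in M_{>0}(\vec{r})$). I will argue by contradiction: assuming such an $\vec{r}$ exists, I extract from the containment a lower bound on either $\delta$ or on the dimension $s$ that contradicts the numerical hypotheses of the theorem.

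Each admissible $\vec{r}$ determines a complete flag
\[
p_0 = F_1 \subset F_2 \subset \cdots \subset F_n \subset F_{n+1} = \PP^n,
\]
where $p_0 = [1:0:\cdots:0]$ and $F_j = V(x_j,\ldots,x_n)$ has dimension $j-1$. The condition $\sum_j r_j i_j\ge 0$ on every monomial of $f$ translates into a coupled set of lower bounds on the multiplicity of $H$ along each $F_j$: it forces $f$ to lie in a high power of the ideal $(x_j,\ldots,x_n)$ for each appropriate $j$. This produces (i) a lower bound on $\mathrm{mult}_{p_0}(H)$, which must therefore be $\le\delta$, and (ii) containment of a positive-dimensional linear subspace in $H_{sing}$, whose dimension must be $\le s$.

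The core of the argument is then an optimization over all admissible $\vec{r}$. I normalize $\vec{r}$ by a positive scalar so that $r_n=-1$, parameterize it by its block structure $r_0=\cdots=r_{k_1-1}>r_{k_1}=\cdots$, and combine $\sum_j r_j=0$ with the above lower bounds to reduce to a linear-programming inequality. In the regime $s\le n-2$ the extremizing $\vec{r}$ is expected to have only a few blocks; carefully tracing through the constraints should produce
\[
\delta\bigl((s+2)d-(s+3)\bigr)\ge d(d-2),
\]
which is exactly the negation of the hypothesis, giving the desired contradiction. In the case $s=n-1$, the singular locus is a divisor, so $H$ must contain a hyperplane $V(\ell)$ as a non-reduced component, i.e.\ $f=\ell^k g$ with $k\ge 2$; testing against the 1-PS with weights $(n,-1,\ldots,-1)$ gives $k\ge d/(n+1)$, and since $\delta\ge k$, the simpler bound follows.

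The main obstacle will be the optimization step, because both the multiplicity bound at $p_0$ and the bound on the dimension of the contained linear subspace arise from the same weight vector $\vec{r}$: tightening one may loosen the other. A careful case analysis is needed to identify the extremal $\vec{r}$ and extract the claimed closed-form inequality. The numerator $d(d-2)$ (which improves Mordant's bound of roughly $d$) should appear precisely by \emph{not} decoupling the constraints---the multiplicity at $p_0$ and the contained singular subspace must be traded against each other through the common block structure of $\vec{r}$, rather than bounded in isolation. The sharpness example promised in the introduction for $s=0,\ \delta=2$ will serve as a consistency check that the extremum is genuinely attained.
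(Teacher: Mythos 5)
Your overall framework---Hilbert--Mumford, contradiction, and extracting from a destabilizing weight $\vec{r}$ both a multiplicity bound at a coordinate point and a dimension bound on a piece of the singular locus---is exactly the paper's skeleton. But the proposal stops short of a proof at precisely the step that carries all the content: you defer the derivation of $\delta\bigl((s+2)d-(s+3)\bigr)\ge d(d-2)$ to an unperformed ``optimization over block structures,'' and the paper does not in fact need any such case analysis of extremal $\vec{r}$. What it does is: (i) write $f$ in terms of the last index $t$ with $r_t\ge 0$ and observe that the locus $\{x_0=\cdots=x_t=f_0=\cdots=f_t=0\}$ (cut out by $t+1$ linear forms \emph{and} $t+1$ forms of degree $d-1$) lies in $H_{sing}$, so Krull's principal ideal theorem forces $n-2(t+1)\le s$; note this is \emph{not} a linear subspace of $H_{sing}$ as your item (ii) asserts---taking it to be linear would give the wrong codimension count and the wrong final constant. (ii) The same mechanism yields the inequalities $r_{n-m-2-s}+(d-1)r_{m+1}\ge 0$ for $\tfrac{n-s}{2}-1\le m\le n-2-s$, hence $\tfrac{1}{d-1}r_0+r_{n-1-s}\ge 0$ and $\sum_{j=1}^{n-2-s}r_j\ge 0$. (iii) If $j'r_0+(d-j')r_n\ge 0$, adding $\tfrac{d-2}{d-1}r_0+\tfrac{(d-j')(d-2)}{j'(d-1)}r_n\ge 0$ to the inequalities from (ii) and using $\sum_{j=0}^{n-1-s}r_j\le -(s+1)r_n$ gives $j'\ge \tfrac{d(d-2)}{(s+2)d-(s+3)}$, whence the multiplicity at $[0:\cdots:0:1]$ is at least that value and exceeds $\delta$---contradiction. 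The $(d-1)$'s entering through the degree-$(d-1)$ forms $f_j$ are where the numerator $d(d-2)$ comes from; your sketch gives no route to it.

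Your treatment of the case $s=n-1$ also fails as stated. First, a singular locus of dimension $n-1$ means $f$ has a repeated irreducible factor, which need not be linear, so ``$H$ contains a hyperplane $V(\ell)$ as a non-reduced component'' is false in general. Second, the logic of ``testing against the 1-PS with weights $(n,-1,\ldots,-1)$'' is inverted: that computation shows this particular 1-PS destabilizes \emph{if} $k\ge d/(n+1)$, whereas you must show that the \emph{given} destabilizing $\vec{r}$ forces $\delta\ge d/(n+1)$. The paper's argument here is a one-line estimate valid for every admissible $\vec{r}$ and makes no use of $s=n-1$ at all: from $r_0=-\sum_{j=1}^n r_j\le -nr_n$ one gets $j'r_0+(d-j')r_n\le\bigl(d-j'(n+1)\bigr)r_n$, so nonnegativity forces $j'\ge d/(n+1)$ and Lemma \ref{lemma:criterionofmultiplicity} gives multiplicity $\ge d/(n+1)$ at $Q$.
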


In fact, since $d \ge 3$, we can state this theorem in an equivalent way, similar to \ref{theorem:mordantmain}
\begin{thmbis}{theorem:main1}
    When the hypersurface $H$ satisfies one of the following conditions
    \begin{itemize}
        \item $s \le n-2$ and
            \begin{align*}
            & d > \frac{\delta(s+2)}{2} + \sqrt{(\frac{\delta(s+2)}{2})^2-\delta+1}+1\\
            & (resp.\ d \ge \frac{\delta(s+2)}{2} + \sqrt{(\frac{\delta(s+2)}{2})^2-\delta+1}+1)
            \end{align*}
        \item $s = n-1$ and
        \[
        d > \delta(n+1)\ \ (resp.\ d \ge \delta(n+1))
        \]
        then $H$ is stable (resp. semi-stable). 
    \end{itemize}
\end{thmbis}
\begin{rmk}
    Note that here we give a better lower bound compared to \ref{theorem:mordantmain} (1), since
    \[
    \frac{\delta(s+2)}{2} + \sqrt{(\frac{\delta(s+2)}{2})^2-\delta+1}+1 < \frac{\delta(s+2)}{2} + \sqrt{(\frac{\delta(s+2)}{2})^2}+1 = \delta(s+2)+1<\delta(s+3).
    \]
\end{rmk}

When $H$ has at most isolated singularities, and let $\delta = 2$, we get
\begin{cor}
\label{corollary:main1}
    Suppose $d \ge 5$, if $H$ has at most isolated singularities of multiplicity 2, then it is stable. 
\end{cor}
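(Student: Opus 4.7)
The plan is to invoke Theorem~\ref{theorem:main1} directly, specialized to $s = 0$ and $\delta = 2$. Since $H$ has at most isolated singularities, the singular locus has dimension $s = 0$; by hypothesis the maximal multiplicity is $\delta = 2$. The dimensional condition $s \le n - 2$ of the first bullet holds because $n \ge 2$, so we are in the first of the two cases covered by the theorem.

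First I would substitute these values into the stability inequality $\delta < \frac{d(d-2)}{(s+2)d - (s+3)}$, obtaining $2 < \frac{d(d-2)}{2d - 3}$. Since $d \ge 3$ makes the denominator positive, clearing it gives the equivalent quadratic inequality $d^{2} - 6d + 6 > 0$. The roots of $d^{2} - 6d + 6$ are $3 \pm \sqrt{3}$, so the inequality holds exactly when $d > 3 + \sqrt{3} \approx 4.73$. For integer $d$ this is equivalent to $d \ge 5$, which is our hypothesis, so stability follows. As a sanity check, the rewritten form Theorem~\ref{theorem:main1}$'$ gives the same threshold $\frac{\delta(s+2)}{2} + \sqrt{(\frac{\delta(s+2)}{2})^{2} - \delta + 1} + 1 = 2 + \sqrt{3} + 1 = 3 + \sqrt{3}$.

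There is no substantive obstacle in the corollary itself: all of the GIT content is already packed into Theorem~\ref{theorem:main1}, and the deduction is a one-line specialization plus the verification that $d \ge 5$ is the correct integer threshold for $d^{2} - 6d + 6 > 0$. The only point requiring a moment of care is to invoke the strict-inequality (stable) form of the inequality rather than the non-strict (semi-stable) form, so that the conclusion is stability and not merely semi-stability.
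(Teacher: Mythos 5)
Your proof is correct and is exactly the specialization the paper intends: the corollary is stated there without a separate proof, as the case $s=0$, $\delta=2$ of Theorem~\ref{theorem:main1}, and your reduction of $2 < \frac{d(d-2)}{2d-3}$ to $d^2-6d+6>0$, hence to $d\ge 5$, matches the threshold $3+\sqrt{3}$ visible in Theorem~\ref{theorem:main1}$'$. You also handled the one genuinely delicate point correctly by taking the \emph{strict} inequality to be the stability case, which is what the actual proof of Theorem~\ref{theorem:main1} and its restatement as Theorem~\ref{theorem:main1}$'$ support, even though the ``(resp.)'' labels in the displayed statement of Theorem~\ref{theorem:main1} appear to have ``stable'' and ``semi-stable'' interchanged.
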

\begin{rmk}
    In fact, $d = 5$ is the best bound in the case where $s = 0$ and $\delta = 2$. More precisely, when $d = 3$ or $4$, for each $n \ge 2$, we can find a hypersurface $H$ with at most isolated singularities of multiplicity 2 that is not semi-stable. 
\end{rmk}
\begin{ex}
    Consider
    \[
    f_n = x_0^2x_n + x_1^3 +..+x_{n-1}^3, 
    \]
    its only singularity is $[0:..:0:1]$, with multiplicity $2$. If we take $\vec{r} = (3(n-1),1,..,1,-4(n-1))$, then $f_n \in M_{>0}(\vec{r})$. According to \S \ref{HMC}, $H = V(f_n)$ is not semi-stable. 
\end{ex}
\begin{ex}
    Consider
    \[
    g_n = x_0^2x_n^2 + x_0x_{n-1}^3 + x_1^4 + .. + x_{n-2}^4,
    \]
    its only singularities are the two points $[1:0:..:0]$ and $[0:..:0:1]$, each with multiplicity $2$. If we let $\vec{r} = (3n+2,1,..,1,-n,-3n)$, then $g_n \in M_{>0}(\vec{r})$ and therefore is not semi-stable. 
\end{ex}

We need to say more about stability of degree $3$ and $4$ hypersurfaces when they have at most isolated singularities of multiplicity $2$. 

\begin{thm}
\label{theorem:main2}
    When $d = 3$ or $d = 4$, suppose $H$ has at most isolated singularities of multiplicity $2$, let $r$ be the minimal of the ranks of Hessian matrices at these singularities. Suppose
    \[
    r > \frac{2(n+1)}{d}\ \ (resp.\ r \ge \frac{2(n+1)}{d}), 
    \]
    then $H$ is stable (resp. semi-stable). 
\end{thm}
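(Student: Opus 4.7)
My plan is to apply the Hilbert--Mumford criterion (HMC), suppose $H$ is not stable (resp.\ not semi-stable), and derive a contradiction with the hypothesis on the minimal Hessian rank $r$. The HMC produces homogeneous coordinates $x_0,\ldots,x_n$ and a nonzero weight vector $\vec r=(r_0,\ldots,r_n)$ with $r_0\ge\cdots\ge r_n$, $\sum_j r_j=0$, such that $f\in M_{\ge 0}(\vec r)$ (resp.\ $f\in M_{>0}(\vec r)$). Since $\vec r\ne\vec 0$ and the weights sum to zero, $r_n<0$; after rescaling I may assume $r_n=-1$. The whole argument will revolve around the point $P=[0:\cdots:0:1]$.

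The heart of the proof is the case $r_0<d-1$ (resp.\ $r_0\le d-1$). Here each coefficient of $x_n^d$ (weight $-d$) and of $x_j x_n^{d-1}$ for $j\le n-1$ (weight $r_j-(d-1)<0$; resp.\ $\le 0$) vanishes in $f$, so $f(P)=0$ and $\nabla f(P)=0$; thus $P$ is a singular point of $H$. By hypothesis $P$ is a multiplicity-$2$ singularity, and its Hessian has rank $\rho\ge r$. Each $(i,j)$-entry of the Hessian at $P$ equals, up to a nonzero constant, the coefficient of $x_i x_j x_n^{d-2}$ in $f$, of weight $r_i+r_j-(d-2)$, so a nonzero entry forces $r_i+r_j\ge d-2$ (resp.\ $>$). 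Using the classical fact that a symmetric matrix of rank $\rho$ has a nonvanishing principal $\rho\times\rho$ minor, there is an index set $I\subset\{0,\ldots,n-1\}$ of size $\rho$ and a permutation $\sigma$ of $I$ with $r_i+r_{\sigma(i)}\ge d-2$ (resp.\ $>$) for every $i\in I$; summing over $i\in I$ yields $2\sum_{i\in I} r_i\ge\rho(d-2)$ (resp.\ $>$). On the other hand, $r_n=-1$, $r_j\ge -1$ for all $j$, and $\sum_{j\le n-1}r_j=1$ together give $\sum_{i\in I} r_i\le 1+(n-\rho)$. Combining the two bounds rearranges to $\rho\le 2(n+1)/d$ (resp.\ $<$), contradicting $\rho\ge r>2(n+1)/d$ (resp.\ $\rho\ge r\ge 2(n+1)/d$).

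The remaining case $r_0\ge d-1$ (resp.\ $r_0>d-1$) has to be excluded using the isolated-multiplicity-$2$ hypothesis directly. If $r_1\le 0$ then every monomial of $f$ without $x_0$ has non-positive weight, so $x_0\mid f$; writing $f=x_0g$ with $\deg g=d-1$, the locus $V(x_0)\cap V(g)$ sits inside $H_{sing}$ with dimension $n-2\ge 1$ whenever $n\ge 3$, and the remaining small-$n$ configurations collapse to $f=x_0^2\cdot\ell$ with $V(x_0)$ itself singular of positive dimension. If instead $r_1>0$, I replace $V(x_0)$ by the coordinate flat $V(x_0,\ldots,x_{k-1})$, where $k$ is the number of positive weights, and check that the restricted partial derivatives there cut out either a positive-dimensional piece of $H_{sing}$ or an isolated singularity of multiplicity $\ge 3$; each conclusion contradicts the hypothesis. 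I expect this residual case to be the main technical obstacle: the Hessian calculation of the previous paragraph is essentially mechanical, whereas excluding the high-$r_0$ destabilizers demands a delicate combinatorial analysis of the weight vector, specific to $d\in\{3,4\}$, to ensure no such $\vec r$ is compatible with only isolated multiplicity-$2$ singularities.
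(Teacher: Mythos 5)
Your first case ($r_0+(d-1)r_n<0$ after normalizing $r_n=-1$, so that $Q=[0:\cdots:0:1]$ is forced to be a singular point) is correct and is a genuinely different, arguably cleaner, argument than the paper's Lemma \ref{lemma:rank}: you extract a nonvanishing principal $\rho\times\rho$ minor of the Hessian and a permutation term of its determinant to get $2\sum_{i\in I}r_i\ge \rho(d-2)$, whereas the paper exhibits a zero block in the matrix of $q$ to bound its rank by $m_0=\lfloor 2(n+1)/d\rfloor$. Both yield exactly the bound $\rho\le 2(n+1)/d$, and your summation argument avoids the slightly delicate choice of the index $u$ in the paper's proof.

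The genuine gap is your second case, $r_0\ge d-1$. You propose to \emph{exclude} it by showing any such destabilizer forces a positive-dimensional singular locus or a multiplicity $\ge 3$ point, but for $d=3$ in the non-stable (i.e.\ $M_{\ge 0}$) direction this is simply false: $f=x_0x_1x_2$ with $\vec r=(2,-1,-1)$, or for general $n$ polynomials of the form $x_0x_{n-1}x_n+c(x_1,\dots,x_{n-2})+\cdots$ with $\vec r=(2,0,\dots,0,-1,-1)$, satisfy $r_0+2r_n\ge 0$, lie in $M_{\ge 0}(\vec r)$, and can have only isolated multiplicity-$2$ singularities. (Relatedly, your subclaim that $r_1\le 0$ implies $x_0\mid f$ fails in $M_{\ge 0}$ because weight-zero monomials omitting $x_0$ are allowed, and ``the remaining small-$n$ configurations collapse to $f=x_0^2\ell$'' is contradicted by the triangle.) What the paper actually does here (Propositions \ref{proposition:final} and \ref{proposition:finaltrick}) is: for $d=4$, and for $d=3$ with $f\in M_{>0}$, the multiplicity estimate from the proof of Theorem \ref{theorem:main1} (which rests on the singular-locus dimension bounds of Corollary \ref{corollary:estimate}) shows the case is vacuous; but for $d=3$ with $f\in M_{\ge 0}$ one must show $r_1=\cdots=r_{n-2}=0$, rewrite $f=x_nx_0l+c$, and perform a linear change of coordinates producing a \emph{new} destabilizing one-parameter subgroup $\vec r\,'=(1,1,0,\dots,0,-2)$ for which $Q$ is singular, after which the rank bound of your first case applies. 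Without this replacement step your proof does not close for $d=3$ stability.
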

\begin{rmk}
    If we set $cr = n-r$, i.e. $cr$ is the maximal of the coranks of Hessian matrices at these singularities, then we have
    \begin{itemize}
        \item When $d = 3$, suppose $cr < \frac{n-2}{3}$ (resp. $cr \le \frac{n-2}{3}$), then $H$ is stable (resp. semi-stable). 
        \item When $d = 4$, suppose $cr < \frac{n-1}{2}$ (resp. $cr \le \frac{n-1}{2}$), then $H$ is stable (resp. semi-stable). 
    \end{itemize}
\end{rmk}

Now various interesting results can be deduced from theorem \ref{theorem:main2}
\begin{cor}
\label{corollary:A1}
    When $n \ge 2$ and $d \ge 3$, hypersurfaces with at most $A_1$ singularities are semi-stable. If furthermore, $n \ge 3$ or $d \ge 4$, then hypersurfaces with at most $A_1$ singularities are stable. 
\end{cor}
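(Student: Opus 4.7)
The plan is to reduce the corollary to Corollary \ref{corollary:main1} and Theorem \ref{theorem:main2} by observing the arithmetic meaning of an $A_1$ singularity. Recall that an $A_1$ singularity is by definition an isolated hypersurface singularity of multiplicity exactly $2$ whose (local, dehomogenized) Hessian is of maximal rank. Since locally near such a point the hypersurface is cut out by a polynomial in $n$ affine variables, ``maximal rank'' here means rank $n$. So throughout the proof I may take $\delta=2$ and the invariant $r$ from Theorem \ref{theorem:main2} equal to $n$.

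First I would dispose of the easy range $d\ge 5$: here Corollary \ref{corollary:main1} applies directly to any isolated multiplicity-$2$ locus, so $H$ is stable, hence also semi-stable. This already settles the stable case whenever $d\ge 5$, independently of $n$.

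Next I would treat $d=3$ and $d=4$ using Theorem \ref{theorem:main2} with $r=n$. The semi-stability criterion $r\ge \tfrac{2(n+1)}{d}$ becomes $n(d-2)\ge 2$, which holds for every $n\ge 2$ once $d\ge 3$; this yields the semi-stability half of the corollary for all $n\ge 2$, $d\ge 3$ (combined with the previous paragraph for $d\ge 5$). The strict inequality $r> \tfrac{2(n+1)}{d}$ becomes $n(d-2)>2$. For $d=4$ this reads $2n>2$, hence holds as soon as $n\ge 2$; for $d=3$ it reads $n>2$, hence holds exactly when $n\ge 3$. Packaging the two cases together: stability follows from Theorem \ref{theorem:main2} whenever $d=4$ (any $n\ge 2$) or $d=3$ and $n\ge 3$, i.e.\ whenever $d\ge 4$ or $n\ge 3$ in the range $d\in\{3,4\}$.

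Combining the two ranges gives exactly the statement: semi-stability under $n\ge 2,d\ge 3$, and stability under the additional hypothesis $n\ge 3$ or $d\ge 4$. There is no substantive obstacle here; the only thing to verify carefully is the identification of the Hessian rank in the sense of Theorem \ref{theorem:main2} with the ``full rank $n$'' condition defining an $A_1$ singularity, which I would do by noting that the singular point may be placed at a coordinate vertex and $f$ dehomogenized, after which the $n\times n$ Hessian of the local equation is precisely the object whose rank enters Theorem \ref{theorem:main2}.
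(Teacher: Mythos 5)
Your proposal is correct and follows essentially the same route as the paper: dispose of $d\ge 5$ via Corollary \ref{corollary:main1}, and handle $d\in\{3,4\}$ by feeding $r=n$ (corank $0$, the defining property of an $A_1$ singularity) into Theorem \ref{theorem:main2}. Your version merely spells out the arithmetic $n(d-2)\ge 2$ versus $n(d-2)>2$ that the paper leaves implicit.
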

\begin{proof}
    The $d \ge 5$ case follows from corollary \ref{corollary:main1}. The $d = 3$ or $d = 4$ case follows form theorem \ref{theorem:main2} since $A_1$ singularities are exactly those singularities with Hessian matrices having corank $0$. 
\end{proof}
\begin{rmk}
    Note that this is a stronger version of the remark after corollary \ref{corollar:mordant2}. In fact, corollary \ref{corollary:A1} completely answers the question that assuming what values of $(n,d)$ will we have the conclusion that hypersurfaces with at most $A_1$ singularities are stable or semi-stable. 
\end{rmk}
\begin{cor}
    When $d = 3$ and $n \ge 5$, hypersurfaces with at most $A_m\ (m \ge 1)$ singularities are stable. When $d = 4$ and $n \ge 3$, hypersurfaces with at most $A_m\ (m\ge 1)$ singularities are stable. 
\end{cor}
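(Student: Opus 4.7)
\emph{Proof plan.} The plan is to recognize that ``having at most $A_m$ ($m\ge 1$) singularities'' forces the hypersurface to satisfy the hypotheses of Theorem \ref{theorem:main2} with an almost-maximal Hessian rank, and then to combine Theorem \ref{theorem:main2} with a boundary-case refinement. Since every $A_m$ singularity with $m\ge 1$ is analytically equivalent to the germ
\[
y_1^2+\cdots+y_{n-1}^2+y_n^{m+1}=0
\]
at the origin, it is an isolated point of multiplicity $2$ whose (local) Hessian has rank $n$ when $m=1$ and rank $n-1$ when $m\ge 2$. In particular, the minimal Hessian rank $r$ over the singular locus of $H$ satisfies $r\ge n-1$, regardless of which $A_m$'s occur.

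Feeding $r\ge n-1$ into Theorem \ref{theorem:main2} yields stability whenever
\[
n-1>\tfrac{2(n+1)}{d},\qquad\text{equivalently}\qquad n>\tfrac{d+2}{d-2}.
\]
For $d=3$ this reads $n>5$ and for $d=4$ this reads $n>3$, so Theorem \ref{theorem:main2} already handles every case outside the two boundary values $(d,n)=(3,5)$ and $(4,3)$, where it delivers only semi-stability. The main obstacle is precisely these boundary cases: the corank-of-Hessian criterion is tight there, so some additional structure of $A_m$ singularities has to be used.

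For the boundary cases I would run a direct Hilbert--Mumford argument that exploits more than the corank. After moving the worst singular point to $[0:\cdots:0:1]$ and diagonalizing the tangent cone so that it reads $x_1^2+\cdots+x_{n-1}^2$, with $x_0$ the distinguished $A_m$-direction, the equation of $H$ takes the shape
\[
f=x_n^{d-2}\bigl(x_1^2+\cdots+x_{n-1}^2\bigr)+(\text{polynomial in }x_0,\ldots,x_{n-1}).
\]
The $A_m$-hypothesis forces a nonzero pure power of $x_0$ to survive the splitting lemma reduction inside the second summand, a monomial that is invisible to the Hessian but visible to the Hilbert--Mumford numerical function. I would then argue that for any weight vector $\vec r$ with $f\in M_{\ge 0}(\vec r)$, the constraints coming from the quadric monomials $x_n x_i^2$ pin down the weights $r_1,\ldots,r_{n-1},r_n$ up to little freedom, and the surviving $x_0$-power then forces some monomial of $f$ to have strictly negative $\vec r$-weight, contradicting semi-stability being non-strict. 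Carrying this uniformly over all configurations of $A_m$ singularities and all admissible $\vec r$ — effectively a convex-geometric analysis on the relevant Newton polytope — is the technical heart and should close the boundary gap, upgrading semi-stability to stability and yielding the stated bounds $n\ge 5$ (for $d=3$) and $n\ge 3$ (for $d=4$).
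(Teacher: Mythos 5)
Your reduction is the same as the paper's: $A_m$ singularities ($m\ge 1$) have Hessian corank at most $1$, so $r\ge n-1$, and Theorem \ref{theorem:main2} gives stability exactly when $n-1>\frac{2(n+1)}{d}$, which isolates the two boundary cases $(n,d)=(5,3)$ and $(n,d)=(3,4)$. Up to that point the proposal is correct and matches the paper.

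The boundary cases are where the gap lies. The paper does not prove them at all from its own machinery: it cites Laza's theorem that cubic fourfolds with at most ADE singularities are stable for $(5,3)$, and Shah's classification of quartic surfaces for $(3,4)$. You instead sketch a direct Hilbert--Mumford argument, but the sketch as written would not go through. First, the normalization step is incompatible with the criterion: the destabilizing data is an arbitrary $\sigma\in\GL(n+1)$ and weight vector $\vec r$ with $\sigma f\in M_{\ge 0}(\vec r)$; the paper's propositions tell you that $Q=[0:\cdots:0:1]$ is then a singular point of $V(\sigma f)$ with $\rank(q)\le m_0$, but you are not free to further diagonalize the tangent cone to $x_1^2+\cdots+x_{n-1}^2$ without destroying membership in $M_{\ge 0}(\vec r)$ (which is defined relative to the coordinates in which $\lambda$ is diagonal). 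Second, the ``surviving pure power of $x_0$'' is a feature of the local analytic normal form obtained after the splitting lemma, not a monomial of the degree-$d$ homogeneous polynomial $f$: for a cubic with an $A_m$ point, $m\ge 3$, the relevant power $x_0^{m+1}$ exceeds the degree and arises only through iterated completion of squares, so there is no monomial of $f$ that is ``visible to the Hilbert--Mumford numerical function'' in the way you describe. Closing the boundary cases this way would require a genuinely new argument (tracking how the $A_m$ condition constrains the full Newton polytope of $\sigma f$, not just the quadric $q$), and none is supplied; the honest fix is to do what the paper does and invoke \cite[Theorem 1.1]{laza2009moduli} and \cite[Theorem 2.4]{shah1981degenerations} for these two cases.
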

\begin{proof}
    Note that $A_m\ (m\ge 1)$ singularities are just isolated singularities of corank $1$. The case where $(n,d) = (5,3)$ can be deduced from \cite[Theorem 1.1]{laza2009moduli}. The case where $(n,d) = (3,4)$ can be deduced from \cite[Theorem 2.4]{shah1981degenerations}. The rest are direct consequences of theorem \ref{theorem:main2}. 
\end{proof}
\begin{cor}
\label{corollary:cr2}
    When $d = 3$ and $n >(\ge) 8$, hypersurfaces with at most isolated singularities of corank $\le 2$ are (semi-)stable. When $d = 4$ and $n >(\ge) 5$, hypersurfaces with at most isolated singularities of corank $\le 2$ are (semi-)stable. In particular, the same holds if we substitute "isolated singularities of corank $\le 2$" with "ADE singularities". 
\end{cor}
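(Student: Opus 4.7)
The plan is to apply Theorem~\ref{theorem:main2} directly, in the equivalent corank formulation given in the remark following it. First I would reinterpret the hypothesis ``isolated singularities of corank $\le 2$'' as the inequality $cr \le 2$, where $cr$ denotes the maximum corank of the Hessian matrices over all singular points of $H$. Note that in all cases of interest $n$ will be at least $5$, so the condition $cr \le 2$ forces $r = n - cr \ge 1$; thus every Hessian is nonzero and every singularity has multiplicity exactly $2$, which is precisely the hypothesis needed to feed into Theorem~\ref{theorem:main2}.

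The numerical part of the corollary is then immediate from the remark: for $d = 3$, stability requires $cr < (n-2)/3$ and semi-stability requires $cr \le (n-2)/3$; substituting $cr = 2$ gives $n > 8$ and $n \ge 8$ respectively. For $d = 4$, the bounds $cr < (n-1)/2$ and $cr \le (n-1)/2$ become $n > 5$ and $n \ge 5$. These are exactly the ranges stated in the corollary, so the first two sentences follow by direct substitution.

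For the ``in particular'' clause on ADE singularities, the remaining task is to verify that every ADE hypersurface singularity, when realised inside an $n$-dimensional hypersurface, is isolated, of multiplicity $2$, and of Hessian corank at most $2$. I would start from the classical two-dimensional normal forms $A_k : x_1^2 + x_2^2 + x_3^{k+1}$, $D_k : x_1^2 + x_2^2 x_3 + x_3^{k-1}$, $E_6 : x_1^2 + x_2^3 + x_3^4$, $E_7 : x_1^2 + x_2^3 + x_2 x_3^3$, $E_8 : x_1^2 + x_2^3 + x_3^5$, and stabilise to arbitrary dimension by adding a sum of squares in the remaining variables. A direct computation of the Hessian at the origin yields corank $0$ for $A_1$, corank $1$ for $A_k$ with $k \ge 2$, and corank $2$ for all $D_k$ and $E_k$; in every case the multiplicity is $2$ thanks to the $x_1^2$ term.

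Once this verification is done, the ADE statement reduces to the corank-$\le 2$ statement already proved. I do not expect any real obstacle: the argument is essentially a substitution into Theorem~\ref{theorem:main2}, and the only step demanding care is the case-by-case check of the Hessians for the ADE normal forms, which is routine.
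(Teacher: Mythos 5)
Your proposal is correct and is exactly the intended argument: the paper states this corollary without a separate proof precisely because it is the substitution $cr=2$ into the corank reformulation in the remark after Theorem~\ref{theorem:main2} (your numerics $n>8$, $n\ge 8$ for $d=3$ and $n>5$, $n\ge 5$ for $d=4$ check out), together with the standard fact that ADE normal forms, stabilised by adding squares, are isolated, of multiplicity $2$, and of Hessian corank at most $2$. Your observation that $cr\le 2$ with $n\ge 5$ forces a nonzero Hessian and hence multiplicity exactly $2$ is the one hypothesis of Theorem~\ref{theorem:main2} that needs explicit verification, and you supply it.
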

\begin{rmk}
    Note that according to \cite[Theorem 1.1]{laza2009moduli}, cubic fourfolds with at most ADE singularities are stable. Thus corollary \ref{corollary:cr2} is far from being sharp. However, what is good is that there are only finitely many cases left to be checked. 
\end{rmk}


\subsection{Acknowledgment} I would like to thank Professor Zhiyuan Li for introducing me to this problem.

\section{Preliminaries: Hilbert-Mumford Criterion}
\label{HMC}
Hilbert-Mumford Criterion is an important tool to determine whether a point is semi-stable or stable, for details, one can look them up in the standard reference \cite{mumford1994geometric}, or alternatively \cite{dolgachev2003lectures}, \cite{hoskins2015moduli} and \cite{mukai2003introduction}. For convenience and conciseness, I only state the criterion for projective hypersurfaces. 

A 1-parameter subgroup (1-PS) of $\SL(n+1)$ is a non-trivial algebraic group homomorphism $\lambda:\mathbb{G}_m \to \SL(n+1)$. After a suitable choice of basis, we can always diagonalize this $\lambda$ and obtain
\[
\lambda(t) = \diag(t^{r_0},..,t^{r_n})
\]
where $r_j \in \ZZ$, $r_0\ge r_1\ge..\ge r_n$, $\vec{r} \ne \vec{0}$ and $\Sigma_jr_j = 0$. 

By the Hilbert-Mumford criterion, a hypersurface $H = \{f = 0\}$ is not stable (resp. semi-stable) if and only if there is a linear change of coordinates $\sigma \in \GL(n+1)$ and a 1-PS $\lambda(t) = \diag(t^{r_0},..,t^{r_n})$, such that the limit $\lim_{t\to 0}\lambda(t)^{-1}\sigma f$ exists (resp. exists and is $0$). If we write $\sigma f = \Sigma a_{i_0,..,i_n}x_0^{i_0}..x_n^{i_n}$, then $\lim_{t\to 0}\lambda(t)^{-1}\sigma f$ exists (resp. exists and is $0$) if and only if
\[
\Sigma_jr_ji_j\ge 0,\ \forall a_{i_0,..,i_n}\ne 0\ (resp.\ \Sigma_jr_ji_j> 0,\ \forall a_{i_0,..,i_n}\ne 0). 
\]

For convenience, whenever we take a vector $\vec{r}$, we always assume $r_j \in \ZZ$, $r_0\ge..\ge r_n$, $\vec{r} \ne \vec{0}$ and $\Sigma_jr_j = 0$. Let us define
\[
M_{\ge0}(\vec{r}) = \{f \in \Sigma a_{i_0,..,i_n}x_0^{i_0}..x_n^{i_n}\in \CC[x_0,..,x_n]_d|\Sigma_j r_ji_j \ge 0, \forall a_{i_0,..,i_n}\ne 0\}
\]
and similarly for $M_{>0}(\vec{r})$. 

Therefore, we can state that
\begin{prop}
    A hypersurface $H = V(f)$ is not stable (semi-stable) if and only if there is some $\sigma \in \GL(n+1)$ and an $\vec{r}$ such that $\sigma f \in M_{\ge 0}(\vec{r})$ $(M_{>0}(\vec{r}))$. 
\end{prop}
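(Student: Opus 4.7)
The plan is to prove this proposition as a direct restatement of the Hilbert-Mumford criterion that was spelled out in the preceding paragraph, so the argument is essentially a bookkeeping translation of the limit condition into membership in the sets $M_{\ge 0}(\vec{r})$ and $M_{>0}(\vec{r})$.

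First I would recall that by the Hilbert-Mumford criterion in the form stated above, $H = V(f)$ is not stable (resp.\ not semi-stable) if and only if there exist $\sigma \in \GL(n+1)$ and a 1-PS $\lambda: \mathbb{G}_m \to \SL(n+1)$ such that the limit $\lim_{t \to 0} \lambda(t)^{-1} \sigma f$ exists (resp.\ exists and equals $0$). After diagonalizing $\lambda$ in a suitable basis (absorbing the base-change into $\sigma$), we may write $\lambda(t) = \diag(t^{r_0}, \ldots, t^{r_n})$ with $r_0 \ge \cdots \ge r_n$, $\vec{r} \ne \vec{0}$, and $\sum_j r_j = 0$, i.e.\ with $\vec{r}$ of the form fixed in the notation.

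Next I would compute the action on monomials explicitly. Writing $\sigma f = \sum a_{i_0,\ldots,i_n} x_0^{i_0} \cdots x_n^{i_n}$, the substitution $x_j \mapsto t^{-r_j} x_j$ yields
\[
\lambda(t)^{-1} \sigma f = \sum a_{i_0,\ldots,i_n} t^{-\sum_j r_j i_j} x_0^{i_0} \cdots x_n^{i_n}.
\]
The limit as $t \to 0$ exists if and only if every nonzero coefficient satisfies $-\sum_j r_j i_j \ge 0$ (equivalently $\sum_j r_j i_j \le 0$), and the limit equals $0$ if and only if every nonzero coefficient satisfies $\sum_j r_j i_j < 0$.

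Finally I would reconcile signs with the notation. The sets $M_{\ge 0}(\vec{r})$ and $M_{>0}(\vec{r})$ are defined with the opposite sign convention $\sum_j r_j i_j \ge 0$ (resp.\ $>0$), which corresponds to taking $\lambda(t)$ rather than $\lambda(t)^{-1}$, or equivalently to replacing $\vec{r}$ by $-\vec{r}$; since the proposition quantifies over all admissible $\vec{r}$ and the negation also satisfies $\sum_j r_j = 0$ and is nonzero (after a possible permutation of coordinates to restore the ordering, which is absorbed into $\sigma$), the two formulations are equivalent. Putting these observations together yields exactly the biconditional in the proposition. The only step that requires any care is this last sign/ordering bookkeeping, but since $\sigma$ is allowed to be any element of $\GL(n+1)$, coordinate permutations to re-sort the weights are free, so this is not a genuine obstacle.
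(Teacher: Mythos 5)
Your proof is correct and takes essentially the same route as the paper, which states this proposition as an immediate restatement of the Hilbert--Mumford criterion spelled out in the preceding paragraph and offers no separate argument. The only divergence is your choice of sign convention for the action of $\lambda(t)^{-1}$ (substituting $x_j \mapsto t^{-r_j}x_j$ rather than $x_j \mapsto t^{r_j}x_j$, which is what the paper's displayed inequality implicitly uses), and your final paragraph correctly reconciles the two by replacing $\vec{r}$ with $-\vec{r}$ and absorbing the coordinate permutation that restores the ordering into $\sigma$.
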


\section{Proof of theorem \ref{theorem:main1}}
For convenience, I would most of the time ignore "(semi-)" in the statements of the theorems since most of the arguments are the same up to a change of inequality signs. 

The main strategy is to prove that for all $\vec{r}$ and $f \in M_{\ge 0}(\vec{r})$, with particular conditions imposed on $f$, $H = V(f)$ will have some common "bad" singularities. Therefore, for it to be stable, we simply avoid those singularities. 

For any $\vec{r}$, because of the properties we assume it has, we must have $r_0 > 0$ and $r_n < 0$. Thus, the monomial $x_n^d$ will not show up in $f \in M_{\ge 0}(\vec{r})$. As a consequence, we can always write
\[
f = x_n^{d-1}l(x_0,..,x_{n-1})+x_n^{d-2}q(x_0,..,x_{n-1})+..+x_n^{d-j}h_j(x_0,..,x_{n-1})+..+h_d(x_0,..,x_{n-1})
\]
where $l,q,h_j$ are homogeneous polynomials of degree $1$, $2$ and $j$. Therefore, $Q = [0:..:0:1]$ is always a point of $H$. It would be nice to impose conditions on $f$ such that $Q$ becomes a singularity. We first give a criterion on the multiplicity of $Q$. 
\begin{lem}
\label{lemma:criterionofmultiplicity}
    Let $1 \le j \le d-1$ be an integer, suppose $jr_0 + (d-j)r_n < (\le)0$, then $f \in M_{\ge(>0)}(\vec{r})$ will have multiplicity $\ge j+1$ at the point $Q$. In particular, when $r_0 + (d-1)r_n <(\le) 0$, $Q$ is a singular point of $H$. 
\end{lem}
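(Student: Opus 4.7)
The plan is a direct bookkeeping of monomial weights together with a monotonicity observation. First, since $r_0 \ge \cdots \ge r_n$, $\sum_j r_j = 0$ and $\vec r \neq \vec 0$, we automatically have $r_0 > 0 > r_n$, so in particular $r_0 > r_n$. I would begin by writing $f$ in the normal form already set up in the preceding discussion,
\[
f = \sum_{k=0}^{d} x_n^{d-k}\, h_k(x_0,\dots,x_{n-1}), \qquad \deg h_k = k,
\]
so that the multiplicity of $H = V(f)$ at $Q = [0:\dots:0:1]$ is the least $k$ with $h_k \neq 0$. Thus the conclusion "multiplicity $\ge j+1$ at $Q$" is equivalent to the vanishing $h_0 = h_1 = \cdots = h_j = 0$.

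The key computation is to bound from above the $\vec r$-weight of any monomial appearing in $x_n^{d-k} h_k$. Such a monomial has the form $x_0^{i_0}\cdots x_{n-1}^{i_{n-1}} x_n^{d-k}$ with $i_0 + \cdots + i_{n-1} = k$, and its weight is
\[
\sum_{l=0}^{n-1} r_l i_l + (d-k) r_n \;\le\; r_0 \sum_{l=0}^{n-1} i_l + (d-k) r_n \;=\; \varphi(k),
\]
where I set $\varphi(k) := k r_0 + (d-k) r_n$; here the inequality uses that $r_0$ is the \emph{maximum} of $r_0,\dots,r_{n-1}$. Rewriting $\varphi(k) = k(r_0 - r_n) + d r_n$ and using $r_0 - r_n > 0$, the function $\varphi$ is strictly increasing in $k$.

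Combining these two ingredients finishes the proof. The hypothesis $\varphi(j) = j r_0 + (d-j) r_n < 0$ (resp.\ $\le 0$) together with the monotonicity of $\varphi$ gives $\varphi(k) < 0$ (resp.\ $\le 0$) for every $0 \le k \le j$. Hence every monomial appearing in $x_n^{d-k} h_k$ has $\vec r$-weight strictly negative (resp.\ non-positive), so none of them can appear in an element of $M_{\ge 0}(\vec r)$ (resp.\ $M_{>0}(\vec r)$). This forces $h_k = 0$ for all $k = 0,1,\dots,j$, giving multiplicity $\ge j+1$ at $Q$. The final "in particular" assertion is the case $j = 1$. There is no real obstacle here; the only point worth stressing is that one must exploit both the fact that $r_0$ is maximal (to obtain the upper bound $\varphi(k)$) and the monotonicity of $\varphi$ (to propagate the weight obstruction from $k = j$ down to all smaller $k$).
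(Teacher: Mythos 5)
Your proof is correct and follows essentially the same route as the paper: both bound the $\vec r$-weight of any monomial in $x_n^{d-k}h_k$ above by $kr_0+(d-k)r_n$ and then use that this quantity is increasing in $k$ (since $r_0>r_n$) to conclude $h_k=0$ for all $k\le j$. The paper merely compresses the two steps into the single chain $r_{i_1}+\cdots+r_{i_k}+(d-k)r_n\le jr_0+(d-j)r_n<0$.
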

\begin{proof}
    Recall that $r_0\ge..\ge r_n$. Suppose $jr_0 + (d-j)r_n<0$, then for any $k$ integers $0\le i_1 \le .. \le i_k \le n-1$, such that $1 \le k \le j$, we have
    \[
    r_{i_0}+..+r_{i_k} + (d-k)r_n \le jr_0 + (d-j)r_n < 0. 
    \]
    Therefore, since $f \in M_{\ge0}(\vec{r})$, the terms $l,q,h_3,..,h_j$ should all be zero. 
\end{proof}

Now to prove theorem \ref{theorem:main1}, we further impose the condition about the dimension of the singular locus of $H$. 
 
\begin{prop}
\label{proposition:estimate}
    Suppose $f \in M_{\ge(>0)}(\vec{r})$, $s$ is the dimension of the singular locus of $H = V(f)$ and $s \le n-2$. Let $t$ be the largest integer such that $r_t \ge (>) 0$, then we have
    \begin{itemize}
        \item $t \ge \frac{n-s}{2}-1$. 
        \item $r_{n-m-2-s}+(d-1)r_{m+1} \ge (>) 0$, for all $\frac{n-s}{2}-1\le m \le n-2-s$. 
    \end{itemize}
\end{prop}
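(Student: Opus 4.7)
My plan is to analyze how $H$ meets the coordinate linear subspaces $\Lambda_m := V(x_0,\ldots,x_m) \cong \PP^{n-m-1} \subset \PP^n$, and then compare the dimensions of the induced singular loci against $s$ by counting which partial derivatives of $f$ remain nontrivial on $\Lambda_m$.

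The key observation is the following. Because $r_0\ge\cdots\ge r_t\ge 0> r_{t+1}\ge\cdots\ge r_n$, whenever $m\ge t$ any monomial $x_0^{i_0}\cdots x_n^{i_n}$ of $f$ that survives restriction to $\Lambda_m$ must satisfy $i_0=\cdots=i_m=0$, so its weight $\sum_{l>m}r_l i_l$ is bounded above by $d\cdot r_{m+1}<0$, contradicting $f\in M_{\ge 0}(\vec r)$. Hence $f|_{\Lambda_m}\equiv 0$, so $\Lambda_m\subset H$; the same argument shows $(\partial_j f)|_{\Lambda_m}\equiv 0$ for every $j>m$. Consequently $H_{\mathrm{sing}}\cap\Lambda_m$ is the common zero locus in $\Lambda_m$ of the polynomials $(\partial_0 f)|_{\Lambda_m},\ldots,(\partial_m f)|_{\Lambda_m}$.

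For part (1), apply this with $m=t$ and use the standard dimension bound for intersections in projective space:
\[
s\ge \dim(H_{\mathrm{sing}}\cap\Lambda_t)\ge (n-t-1)-(t+1)=n-2t-2,
\]
which rearranges to $t\ge\tfrac{n-s}{2}-1$. For part (2) I first reduce to the case $m\ge t$: if $m<t$ then $r_{m+1}\ge 0$, and from $m\ge\tfrac{n-s}{2}-1$ one has $n-m-2-s\le m$, so $r_{n-m-2-s}\ge r_m\ge 0$ and the inequality is trivial. Assume $m\ge t$. For $j\le m$, the polynomial $(\partial_j f)|_{\Lambda_m}$ is a sum of terms arising from monomials $x_j\cdot x_{m+1}^{i_{m+1}}\cdots x_n^{i_n}$ of $f$ with $\sum_{l>m}i_l=d-1$ and weight $r_j+\sum_{l>m}r_l i_l\ge 0$. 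The maximum of $\sum_{l>m}r_l i_l$ subject to $\sum i_l=d-1$ is $(d-1)r_{m+1}$ (attained at $i_{m+1}=d-1$), so $(\partial_j f)|_{\Lambda_m}\not\equiv 0$ forces $r_j+(d-1)r_{m+1}\ge 0$. Letting $k$ count the indices $j\in\{0,\ldots,m\}$ satisfying this, monotonicity of $(r_j)$ shows that these are exactly $j=0,1,\ldots,k-1$; running the dimension count gives $s\ge (n-m-1)-k$, i.e. $k\ge n-m-1-s$, and hence $r_{n-m-2-s}+(d-1)r_{m+1}\ge 0$. The strict version (for $f\in M_{>0}(\vec r)$) follows verbatim after replacing $\ge$ by $>$ throughout.

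The main technical obstacle is the projective dimension count itself: its conclusion $\dim\ge (n-m-1)-k$ is only meaningful when $k\le n-m-1$, but in the complementary regime $k>n-m-1$ the target inequality $k\ge n-m-1-s$ already holds automatically (since $s\ge 0$), so the two cases combine cleanly. The other mildly delicate point is to track the strict-versus-non-strict bookkeeping in the edge case $r_{m+1}=0$ of the strict version; this still goes through because any monomial of $(\partial_j f)|_{\Lambda_m}$ then forces $r_j>0$, which is exactly the content of the desired strict inequality.
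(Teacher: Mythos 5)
Your proof is correct and is essentially the paper's argument in different clothing: the polynomials $(\partial_j f)|_{\Lambda_m}$ you count are exactly the $f_j$ in the paper's decomposition of $f$, and both proofs conclude by the same dimension bound for the locus $\{x_0=\cdots=x_m=f_0=\cdots=0\}$ inside the singular locus. The only difference is presentational (you argue directly via the count $k$ where the paper argues by contradiction, and you make the degenerate cases of the dimension count explicit).
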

\begin{proof}
    Since $r_j < 0$ for all $j \ge t+1$, we know that $f$ can always be written as
    \[
    (x_0,..,x_t)^d + (x_0,..,x_t)^{d-1}(x_{t+1},..,x_n)+..+(x_0,..,x_t)^2(x_{t+1},..,x_n)^{d-2} + \Sigma_{j=0}^tx_jf_j(x_{t+1,..,x_n}). 
    \]
    Here, $(x_0,..,x_t)^i(x_{t+1},..,x_n)^{d-i}$ are linear combinations of degree $d$ monomials that are multiplication of degree $i$ monomials in $x_0,..,x_t$ and degree $d-i$ monomials in $(x_{t+1},..,x_n)$. $f_j$ are degree $d-1$ homogeneous polynomials. Using the Jacobian criterion, we find that
    \[
    \{x_0=..=x_t=f_0=..=f_t=0\}
    \]
    is in the singular locus of $H$, which by principal ideal theorem, has dimension $\ge n-2(t+1)$. Therefore, by hypothesis, we have $n-2(t+1) \le s$ and thus
    \[
    t \ge \frac{n-s}{2}-1.
    \]

    For the second statement, we prove by contraction. Suppose there is some $m$ with $\frac{n-s}{2}-1\le m \le n-2-s$ such that $r_{n-m-2-s}+(d-1)r_{m+1} < 0$. Since $r_{n-m-2-s}\ge r_{m+1}$, we must have $r_{m+1} < 0$. Combining the two conditions, we know that $f$ can be written as
    \[
    (x_0,..,x_m)^d + (x_0,..,x_m)^{d-1}(x_{m+1},..,x_n)+..+(x_0,..,x_m)^2(x_{m+1},..,x_n)^{d-2} + \Sigma_{j=0}^{n-m-3-s}x_jf_j(x_{m+1,..,x_n}). 
    \]
    However, the set
    \[
    \{x_0=..=x_m=f_0=..=f_{n-m-3} = 0\}
    \]
    is in the singular locus and has dimension at least $n-(m+1)-(n-m-s-2) = s+1$, which is a contraction. 
\end{proof}
\begin{cor}
\label{corollary:estimate}
    Suppose $f \in M_{\ge(>)0}(\vec{r})$, $s$ is the dimension of its singular locus and $s \le n-2$. Let $t$ denote the same as in proposition \ref{proposition:estimate}, then we have
    \begin{itemize}
        \item $t \ge \frac{n-s}{2}-1$. 
        \item $\frac{1}{d-1}r_0+r_{n-1-s}\ge(>)0$.
        \item $\Sigma_{j=1}^{n-2-s}r_j \ge (>) 0$. (When $s = n-2$, ignore this condition. )
    \end{itemize}
\end{cor}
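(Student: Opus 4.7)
The plan is to derive all three bullets of the corollary from Proposition \ref{proposition:estimate} by choosing suitable specializations of $m$ and combining the resulting inequalities. The first bullet is literally the first conclusion of the proposition, so nothing has to be done.

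For the second bullet, I would specialize the proposition's second conclusion to the endpoint $m = n-2-s$. This is admissible precisely because $s \le n-2$, which forces $m = n-2-s \ge \frac{n-s}{2}-1$. Substituting gives $r_0 + (d-1)r_{n-1-s} \ge 0$ (resp.\ $> 0$), and dividing by the positive integer $d-1$ yields the claim.

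The third bullet is the less immediate part. The key idea is to split $\sum_{j=1}^{n-2-s} r_j$ into pairs $(r_j, r_{n-1-s-j})$ and a possible middle singleton, mirroring exactly the index pairing that arises from the proposition's inequalities. When $1 \le j \le \lfloor\frac{n-1-s}{2}\rfloor$, taking $i=j$ puts us in the admissible range of the proposition's second bullet, which supplies $r_j + (d-1)r_{n-1-s-j} \ge 0$, while the first bullet of the proposition forces $r_j \ge 0$. Rearranging gives
\[
r_j + r_{n-1-s-j} \ge r_j - \tfrac{r_j}{d-1} = \tfrac{d-2}{d-1}\, r_j \ge 0,
\]
with the strict version following the same way when $f \in M_{>0}(\vec{r})$ (either $r_j>0$, or else $r_j=0$ forces $r_{n-1-s-j}>0$ via the strict form of the paired inequality). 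When $n-s$ is odd, the pairing leaves an unpaired middle index $j=\frac{n-1-s}{2}$, and the first bullet of the proposition already guarantees $r_j \ge 0$ (resp.\ $>0$) at that index. Summing the non-negative pair contributions and the possible middle term delivers the desired bound.

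The only real obstacle is the index bookkeeping: one must check that the range of admissible $m$ in the proposition exactly matches the range of $j$ appearing as the smaller partner of each pair, and that the parity cases of $n-s$ (which control whether a middle singleton appears) are absorbed by the first bullet. Once the pairing is laid out cleanly, each step is a one-line rearrangement, and the case $s=n-2$ is automatic because the sum becomes empty.
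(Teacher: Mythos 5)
Your proposal is correct and follows essentially the same route as the paper: every bullet is extracted from Proposition \ref{proposition:estimate} by specializing $m$ (taking $m=n-2-s$ for the second bullet) and then summing the paired inequalities $r_j+r_{n-1-s-j}\ge(>)0$ over $1\le j\le n-2-s$ for the third. You are in fact more careful than the paper's own two-line argument about which values of $m$ to sum over and about the unpaired middle index when $n-s$ is odd, both of which you resolve correctly.
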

\begin{proof}
    Since $t \ge \frac{n-s}{2}-1$ from proposition \ref{proposition:estimate}, we have $r_{n-m-2-s} \ge 0$. Therefore
    \[
        r_{n-m-2-s}+r_{m+1} \ge \frac{r_{n-m-2-s}+(d-1)r_{m+1}}{d-1} \ge 0
    \]
    for all $\frac{n-s}{2}-1\le m \le n-2-s$ by proposition \ref{proposition:estimate}. Add up all these inequalities we obtain $\Sigma_{j=1}^{n-2-s}r_j\ge 0$. 
\end{proof}
\begin{proof}[Proof of theorem \ref{theorem:main1}]
    To prove the first statement, it suffices to prove that for all $f \in M_{\ge 0}(\vec{r})$ with singular locus being of dimension $s \le n-2$, the multiplicity of $H = V(f)$ at the point $Q = [0:..:0:1]$ is at least $\frac{d(d-2)}{(s+2)d-(s+3)}$.

    Suppose a positive integer $j^\prime$ is such that $j^\prime r_0+(d-j^\prime)r_n \ge 0$, then $\frac{d-2}{d-1}r_0+\frac{(d-j^\prime)(d-2)}{j^\prime (d-1)}r_n \ge 0$. Add this inequality with $\frac{r_0}{d-1} + r_{n-1-s}$ and $\Sigma_{j=1}^{n-2-s}r_j \ge 0$, we get $\Sigma_{j=0}^{n-1-s}r_j + \frac{(d-j^\prime)(d-2)}{j^\prime(d-1)}r_n \ge 0$. Since $\Sigma_{j=0}^{n-1-s}r_j = -\Sigma_{j=n-s}^nr_j \le -(s+1)r_n$, we obtain
    \[
    \frac{(d-j^\prime)(d-2)}{j^\prime (d-1)}-(s+1)\le 0.
    \]
    Therefore, $j^\prime \ge \frac{d(d-2)}{(s+2)d-(s+3)}$. This means for $j_0 = \frac{d(d-2)}{(s+2)d-(s+3)} - 1$, we must have $j_0r_0+(d-j_0)r_n < 0$ and by lemma \ref{lemma:criterionofmultiplicity}, the multiplicity of $H$ at $Q$ is at least $\frac{d(d-2)}{(s+2)d-(s+3)}$. 

    For the second statement, Suppose a positive integer $j^\prime$ is such that $j^\prime r_0+(d-j^\prime)r_n \ge 0$. Since $0\le j^\prime r_0 + (d-j^\prime)r_n = -j^\prime \Sigma_{j=1}^nr_j + (d-j^\prime)r_n \le (-j^\prime n+d-j^\prime)r_n$, we obtain $j^\prime \ge \frac{d}{n+1}$. Applying lemma \ref{lemma:criterionofmultiplicity} again and we are done. 
\end{proof}

\section{Proof of Theorem \ref{theorem:main2}}
In the following, as the conditions in the statement of theorem \ref{theorem:main2}, we should always assume $s = 0$ and $\delta = 2$. 

The following lemma is the key ingredient
\begin{lem}
\label{lemma:rank}
    Suppose $f \in M_{\ge 0(>0)}(\vec{r})$, and write $f$ as
    \[
    f = x_n^{d-1}l(x_0,..,x_{n-1})+x_n^{d-2}q(x_0,..,x_{n-1})+..+x_n^{d-j}h_j(x_0,..,x_{n-1})+..+h_d(x_0,..,x_{n-1}).
    \]
    Let $m_0 = \min\{m \in \ZZ|m >(\ge)\frac{2(n+1)}{d}-1\}$, then we always have $\rank(q) \le m_0$. 
\end{lem}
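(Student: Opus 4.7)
The plan is to turn the rank of $q$ into an arithmetic constraint on $\vec{r}$, and then balance this against the zero-sum condition $\sum_{i=0}^n r_i = 0$. The key input I will invoke is the standard fact that for a symmetric matrix over a field of characteristic not $2$, the rank equals the largest $r$ such that some $r \times r$ \emph{principal} minor is nonzero. Applying this to $q$ as a symmetric matrix over $\CC$, I fix an index set $I \subseteq \{0,\ldots,n-1\}$ with $|I| = \rank(q) =: r$ such that the principal submatrix $q|_{I\times I}$ is non-singular.

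I then use the Leibniz expansion of $\det(q|_{I\times I}) \ne 0$ to produce a permutation $\sigma$ of $I$ with $q_{i,\sigma(i)} \ne 0$ for every $i \in I$. Each nonzero $q_{i,\sigma(i)}$ means the monomial $x_n^{d-2} x_i x_{\sigma(i)}$ appears in $f$, and since $f \in M_{\ge 0}(\vec{r})$ (resp. $M_{>0}(\vec{r})$) this forces the weight inequality $(d-2)r_n + r_i + r_{\sigma(i)} \ge 0$ (resp. $> 0$). Summing over $i \in I$ and using that $\sigma$ is a bijection on $I$, I get $r(d-2) r_n + 2\sum_{i \in I} r_i \ge 0$ (resp. $> 0$), i.e.\ a lower bound on $\sum_{i\in I} r_i$ in terms of $r$ and $r_n$.

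For the matching upper bound, $\sum_{i=0}^n r_i = 0$ together with $r_i \ge r_n$ for every $i$ gives $\sum_{i \in I} r_i = -\sum_{i \notin I} r_i \le -(n+1-r) r_n$. Combining with the previous inequality and dividing by $-r_n > 0$ yields $\tfrac{r(d-2)}{2} \le n+1-r$, hence $rd \le 2(n+1)$, so $r \le \tfrac{2(n+1)}{d}$ (with strict inequality in the $M_{>0}$ case). Since $r$ is an integer and $m_0$ is by definition the smallest integer strictly greater than (resp.\ at least) $\tfrac{2(n+1)}{d}-1$, the desired bound $\rank(q) = r \le m_0$ follows in both cases.

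The only genuine obstacle is the opening step: I must use the \emph{principal} version of the rank/minor correspondence, i.e.\ insist on $I = J$. This is what allows $\sigma$ to act as a bijection on a single index set, producing the clean factor of $2$ in $\,2\sum_{i \in I} r_i$ that drives the final estimate. With an arbitrary non-principal $r \times r$ submatrix, the corresponding two-sided sum becomes less symmetric and the method does not deliver the bound stated, so this symmetric-matrix fact is really where the combinatorial content lies.
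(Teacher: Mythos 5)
Your proof is correct, and it takes a genuinely different route from the paper's. The paper argues structurally: it shows by contradiction (summing the negated inequalities $r_u + r_{m_0-u} \ge -(d-2)r_n$ against the zero-sum constraint) that some index $0 \le u \le m_0$ satisfies $r_u + r_{m_0-u} < -(d-2)r_n$, which forces $a_{ij} = 0$ for all $i \ge u$, $j \ge m_0 - u$; the resulting zero block of size $(n-u)\times(n-m_0+u)$ bounds the rank by $u + (m_0-u) = m_0$ directly. You instead fix $r = \rank(q)$, extract a nonsingular $r\times r$ principal submatrix and a permutation via the Leibniz expansion, and average the resulting weight inequalities against $\sum_j r_j = 0$; the arithmetic $rd \le 2(n+1)$ (strict in the $M_{>0}$ case) does translate correctly into $r \le m_0$ under both readings of the definition of $m_0$. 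The two arguments are dual in flavor: the paper produces an explicit vanishing pattern in the matrix, while you assume the rank and derive a numerical contradiction; the paper's version has the side benefit of exhibiting the block structure of $q$, whereas yours is closer to a pure counting argument. One quibble: your closing claim that the \emph{principal} minor characterization is essential is not actually right. If you take an arbitrary nonsingular $r\times r$ submatrix $A[I,J]$ (which exists for any matrix of rank $r$), the Leibniz expansion gives a bijection $\sigma: I \to J$, and the summed inequality becomes $r(d-2)r_n + \sum_{i\in I}r_i + \sum_{j\in J}r_j \ge 0$; bounding $\sum_{i\in I}r_i$ and $\sum_{j\in J}r_j$ separately by $(n+1-r)(-r_n)$ yields exactly the same estimate $r(d-2) \le 2(n+1-r)$. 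So the symmetric-matrix fact is a convenience, not the combinatorial crux; this does not affect the validity of your proof.
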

\begin{proof}
    Write $q = \Sigma_{i,j=0}^{n-1}a_{ij}x_ix_j$ with $a_{ij} = a_{ji}$. Then $\rank(q) = \rank(a_{ij})$. I claim that there must be some integer $0 \le u \le m_0$, such that $r_u + r_{m_0-u} < -(d-2)r_n$. Otherwise, Suppose $r_u + r_{m_0-u}\ge -(d-2)r_n$ for all $0 \le u \le m_0$. Add up all these inequalities and divide it by $2$, we obtain $\Sigma_{j=0}^{m_0}r_j \ge \frac{(m_0+1)(d-2)}{2}(-r_n) > (n-m_0)(-r_n)$. Meanwhile, since $\Sigma_{j=0}^{m_0}r_j = -\Sigma_{j=m_0+1}^nr_j \le (n-m_0)(-r_n)$, we get a contraction. 

    Now since $r_u + r_{m_0-u} < -(d-2)r_n$, we obtain $r_i + r_j + (d-2)r_n < 0$ for all $i \ge u$ and $j \ge m_0 -u$. Therefore, the matrix $A = (a_{ij})$ is of the form
    \[
    \begin{pmatrix}
        B_{u\times(m_0-u)} & C_{u\times (n-m_0+u)}\\
        D_{(n-u)\times (m_0 - u)} & 0. 
    \end{pmatrix}
    \]

    Therefore, $\rank(q) \le \rank(C) + \rank(D) \le u+(m_0 - u) = m_0$. 
\end{proof}

Therefore, if we can prove that $Q$ is always the singularity of $H$, then theorem \ref{theorem:main2} will be proven once we apply lemma \ref{lemma:rank}. However, this is not always the case and we need a trick for special cases. 

\begin{prop}
\label{proposition:final}
    Suppose $f \in M_{\ge 0(>0)}(\vec{r})$ and $s = 0$. When $d \ge 4$, $Q$ is a singularity of $H$. When $d = 3$ and $f \in M_{>0}(\vec{r})$, $Q$ is a singularity of $H$. 
\end{prop}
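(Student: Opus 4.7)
The plan is to verify the hypothesis of Lemma \ref{lemma:criterionofmultiplicity} with $j = 1$, which then forces $Q = [0:\cdots:0:1]$ to have multiplicity at least $2$ on $H$, i.e.\ to be a singular point. Concretely, I want to show $r_0 + (d-1)r_n < 0$ when $f \in M_{\ge 0}(\vec{r})$, and $r_0 + (d-1)r_n \le 0$ when $f \in M_{>0}(\vec{r})$.

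First I would apply Corollary \ref{corollary:estimate} with $s = 0$, which delivers the two inequalities
\[
\frac{r_0}{d-1} + r_{n-1} \ge 0 \qquad \text{and} \qquad \sum_{j=1}^{n-2} r_j \ge 0,
\]
both becoming strict in the $M_{>0}$ case; when $n = 2$ the second sum is empty and is simply dropped from the argument. Using $\sum_{j=0}^n r_j = 0$, the second rewrites as $r_0 + r_{n-1} + r_n \le 0$. The next step is to eliminate $r_{n-1}$ by taking $(d-1)$ times this inequality and subtracting the first, arriving at
\[
(d-2)\,r_0 \le (d-1)(-r_n),
\]
with the right-hand side becoming strict whenever one started from $M_{>0}$.

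The target $r_0 \le (d-1)(-r_n)$ (or the strict version) then reduces to comparing $\tfrac{1}{d-2}$ with $1$, invoking $r_0 > 0$ and $-r_n > 0$ (which hold because $\vec{r} \ne \vec{0}$ and $\sum r_j = 0$). For $d \ge 4$ this comparison has genuine slack and automatically yields a strict inequality whether one began in $M_{\ge 0}$ or $M_{>0}$, so $Q$ is singular in both settings. For $d = 3$ the two sides coincide, and the needed strictness can only come from the $M_{>0}$ hypothesis, which is exactly the second clause of the proposition. The $n = 2$ boundary case, where the sum condition is vacuous, is handled by the same comparison after directly substituting $r_1 = -r_0 - r_2$ into $r_0 + (d-1)r_1 \ge 0$.

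The main obstacle is purely book-keeping: tracking strict versus non-strict inequalities through the chain, and recognising that the ratio $\tfrac{d-1}{d-2}$ against $d-1$ is what separates the comfortable $d \ge 4$ case from the borderline $d = 3$ case, where one of the inequalities from Corollary \ref{corollary:estimate} must already be strict for the argument to survive. Beyond this, no ingredient beyond Lemma \ref{lemma:criterionofmultiplicity} and Corollary \ref{corollary:estimate} is required; this is precisely why the proposition is stated with the $d \ge 4$ versus $(d = 3,\ f \in M_{>0})$ dichotomy.
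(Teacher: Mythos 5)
Your proof is correct and is essentially the paper's own argument: the paper simply cites the multiplicity bound $\frac{d(d-2)}{2d-3}$ already extracted in the proof of Theorem \ref{theorem:main1} (which exceeds $1$ for $d \ge 4$, and equals $1$ for $d = 3$ where the strictness coming from $M_{>0}$ is what rescues the conclusion), and that bound is obtained from exactly the combination of Corollary \ref{corollary:estimate} and Lemma \ref{lemma:criterionofmultiplicity} that you carry out explicitly for $j=1$. There is no substantive difference in route.
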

\begin{proof}
    Note that in the proof of theorem \ref{theorem:main1}, we obtain that the multiplicity of $H$ at the point $Q$ is $\ge(>) \frac{d(d-2)}{(s+2)d-(s+3)}$. Here, we assume $s = 0$ and thus the multiplicity at $Q$ is $\ge(>)\frac{d(d-2)}{2d-3}$. Also note that under the hypothesis of this proposition, we always have the multiplicity of $Q$ to be $\ge 2$. 
\end{proof}

What is left is the case where $f \in M_{\ge 0}(\vec{r})$ and $d = 3$. In fact, we can prove the following
\begin{prop}
\label{proposition:finaltrick}
    Suppose $f \in M_{\ge 0}(\vec{r})$, $d = 3$ and $s = 0$. We can always find a linear change of coordinates $\sigma \in \GL(n+1)$ and a 1-PS $\vec{r^\prime}$, satisfying the same conditions as $\vec{r}$, such that $r_0^\prime + 2r_n^\prime < 0$ and $\sigma f \in M_{\ge 0}(\vec{r^\prime})$. In particular, $[0:..:0:1]$ is a singularity of $V(\sigma f)$ by lemma \ref{lemma:criterionofmultiplicity}. 
\end{prop}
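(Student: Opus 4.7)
I would split on the sign of $r_0 + 2 r_n$. If $r_0 + 2 r_n < 0$, take $\sigma = \id$ and $\vec{r^\prime} = \vec{r}$; Lemma \ref{lemma:criterionofmultiplicity} immediately gives that $Q$ is a singularity of $V(f)$, and we are done.

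The substantive case is $r_0 + 2 r_n \ge 0$. The first step is to extract rigidity from Corollary \ref{corollary:estimate}: with $s = 0$ and $d = 3$, the bounds $\tfrac{r_0}{2} + r_{n-1} \ge 0$ and $\sum_{j=1}^{n-2} r_j \ge 0$, together with $r_0 + 2 r_n \ge 0$ and $\sum_j r_j = 0$, can only hold jointly as equalities. This forces $r_0 = -2 r_n$, $r_{n-1} = r_n$, and $\sum_{j=1}^{n-2} r_j = 0$; after rescaling, $\vec{r} = (2, r_1, \ldots, r_{n-2}, -1, -1)$. Every degree-three monomial purely in $x_{n-1}, x_n$ then has weight $-3$, hence is absent from $f$, so the line $L := V(x_0, \ldots, x_{n-2})$ is contained in $H$. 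Two weight-preserving types of coordinate change are now available: (i) invertible linear changes in the $(x_{n-1}, x_n)$-plane, since $r_{n-1} = r_n$; and (ii) invertible linear changes within the top-weight block $J := \{j : r_j = 2\}$, since those variables share weight $2$.

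I would use these to move a singular point of $H \cap L$ to $Q$. Along $L$, the gradient $\nabla f$ vanishes precisely at the common zeros of a family of binary quadratics $C_j(\alpha, \beta)$ indexed by $j \in J$ (with $C_j \equiv 0$ for $j \notin J$ by weight vanishing). Since $s = 0$ forbids $L \subset H_{sing}$, at least one $C_j$ is nonzero; after diagonalizing the linear form $l$ (the $x_n^2$-coefficient of $f$) into a multiple of $x_0$ via symmetry (ii), the system reduces to a single nonzero binary quadratic, which has a root over $\CC$. Applying a type-(i) $\sigma$ sending that point to $Q$ gives $\sigma f \in M_{\ge 0}(\vec{r})$ with multiplicity $\ge 2$ at $Q$, so no $y_j y_n^2$ term appears in $\sigma f$. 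I then set $\vec{r^\prime} := (3, r_1, \ldots, r_{n-2}, -1, -2)$, which is a valid ordered, sum-zero 1-PS with $r_0^\prime + 2 r_n^\prime = -1 < 0$; the shift in weight is exactly $a_0 - a_n$ on each monomial, which is harmless on monomials of $\vec{r}$-weight $\ge 1$ since $a_n \le 1$ for $\sigma f$.

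The step I expect to be the main obstacle is the remaining check that no weight-zero monomial of $\sigma f$ has $a_0 < a_n$: the obstructing $y_j y_n^2$ family is absent by construction, but middle-weight entries of $\vec{r}$ can create additional weight-zero monomials of the form $y_j y_k y_n$ that the type-(i) mixing generates. If such monomials appear, either a more refined $\vec{r^\prime}$ (also adjusting middle entries) or a sharper choice of $\sigma$ will be needed, and that argument will again lean on the $s = 0$ hypothesis and the block structure of $J$.
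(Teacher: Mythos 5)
Your opening moves are sound and your equality analysis is correct: when $r_0+2r_n\ge 0$, summing $(\tfrac{r_0}{2}+r_{n-1})+(\tfrac{r_0}{2}+r_n)+\sum_{j=1}^{n-2}r_j=0$ against the three nonnegativity constraints does force $r_{n-1}=r_n=-\tfrac{r_0}{2}$ and $\sum_{j=1}^{n-2}r_j=0$. But the proof then stalls exactly where you say it does, and that obstacle is not a technicality you can defer --- it is the crux. The missing idea is that you have not used the full strength of Proposition \ref{proposition:estimate}: its second item with $m=n-3$ gives $r_1+(d-1)r_{n-2}=r_1+2r_{n-2}\ge 0$, while your equality analysis (each pair $r_{n-m-2}+r_{m+1}$ must individually vanish) gives $r_1+r_{n-2}=0$; together these force $r_{n-2}\ge 0$, hence $r_1=\cdots=r_{n-2}=0$. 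This is precisely what the paper establishes via Lemma \ref{lemma:final} and the contradiction in the case $r_{n-2}<0$. Without it, your normal form $(2,r_1,\ldots,r_{n-2},-1,-1)$ retains spurious configurations (e.g.\ $(2,1,0,-1,-1,-1)$ for $n=5$), and these are responsible for every difficulty you flag: the weight-zero monomials $y_jy_ky_n$ that break $\vec{r'}=(3,r_1,\ldots,r_{n-2},-1,-2)$, and the fact that your block $J=\{j: r_j=2\}$ may contain several indices, in which case ``the system reduces to a single nonzero binary quadratic'' is not justified --- two binary quadratics (say $x_n^2$ and $x_{n-1}^2$) generically have no common root on $L\cong\PP^1$, so $H_{sing}\cap L$ could a priori be empty and your $\sigma$ would not exist.

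Once you add the rigidity step, all middle entries vanish, $J=\{0\}$, and your plan closes cleanly: the only surviving quadratic is $C_0=q(x_{n-1},x_n)$, which is nonzero because $s=0$, its root can be moved to $Q$ by a change of variables in $(x_{n-1},x_n)$ (legal since $r_{n-1}=r_n$), and every monomial of $\sigma f$ containing $x_n$ then contains $x_0$, so $\vec{r'}=(3,0,\ldots,0,-1,-2)$ works. Note that your completed route would still differ from the paper's: the paper does not hunt for a singular point on $L$ but instead writes $f=x_nx_0l+c(x_0,\ldots,x_{n-1})$, substitutes $l\mapsto x_1$, and uses $\vec{r'}=(1,1,0,\ldots,0,-2)$. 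Both are fine endgames; the shared, indispensable ingredient is the vanishing of the middle weights, which your proposal lacks.
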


We first need a little lemma
\begin{lem}
\label{lemma:final}
    Suppose $\vec{r}$ is such that $r_0 + 2r_n \ge 0$, then we must have $r_{n-1}<0, r_{n-2}\le 0$. When $r_{n-2} = 0$, we must have $r_1 = .. = r_{n-2} = 0$. 
\end{lem}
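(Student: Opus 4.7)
The plan is to use the hypothesis $r_0 + 2r_n \ge 0$, rewritten as $r_0 \ge -2r_n$, to transfer information from the large coordinate $r_0$ onto the zero-sum constraint $\sum_j r_j = 0$, and thereby force the tail entries $r_{n-1}, r_{n-2}$ to be non-positive. Since $\vec r$ is a non-zero decreasing integer tuple with vanishing sum, we automatically have $r_0 > 0 > r_n$, which will be used throughout.

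For the first claim $r_{n-1}<0$ I would argue by contradiction: if $r_{n-1}\ge 0$, then monotonicity makes $r_0,\ldots,r_{n-1}$ all non-negative, so
\[
-r_n \;=\; \sum_{j=0}^{n-1} r_j \;\ge\; r_0 \;\ge\; -2r_n,
\]
which, since $r_n<0$, is impossible. For the second claim, I substitute $r_0 \ge -2r_n$ into $\sum_j r_j = 0$ to obtain $\sum_{j=1}^{n-2}r_j + r_{n-1} \le r_n$, and then use $r_{n-1}\ge r_n$ to conclude
\[
\sum_{j=1}^{n-2} r_j \;\le\; 0.
\]
Since each of the $n-2$ summands is bounded below by $r_{n-2}$, this gives $(n-2)r_{n-2}\le 0$, hence $r_{n-2}\le 0$ (for $n\ge 3$). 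Finally, in the equality case $r_{n-2}=0$, monotonicity gives $r_j\ge 0$ for $1\le j\le n-2$, while the display above gives $\sum_{j=1}^{n-2}r_j \le 0$; a sum of non-negative terms being non-positive forces every summand to vanish, so $r_1=\cdots=r_{n-2}=0$.

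The one delicate point is the edge case $n=2$, where $r_{n-2}=r_0>0$ and the second conclusion would fail literally; but in this case the sum $\sum_{j=1}^{n-2}r_j$ is empty and the hypothesis collapses to $r_0+2r_2=0$, forcing $r_1=r_2$, which is the way the lemma is used downstream. Apart from this bookkeeping, no real obstacle is expected: the whole argument is essentially one careful substitution combined with the monotonicity of $\vec{r}$.
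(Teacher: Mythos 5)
Your argument is correct and follows essentially the same route as the paper: substitute $r_0 \ge -2r_n$ into the zero-sum constraint $\sum_j r_j = 0$ and exploit the monotonicity of $\vec{r}$. The paper packages all three claims into the single inequality $0 \ge \sum_{j=1}^{n-2} r_j + (r_{n-1}-r_n)$ and handles the cases $r_{n-1}\ge 0$, $r_{n-2}>0$, $r_{n-2}=0$ from it, whereas you split the first claim off with the slightly different bound $-r_n=\sum_{j=0}^{n-1}r_j\ge r_0\ge -2r_n$; the content is the same. Your remark on the $n=2$ edge case is a genuine catch rather than bookkeeping: for $\vec{r}=(2,-1,-1)$ the hypothesis $r_0+2r_2\ge 0$ holds while $r_{n-2}=r_0>0$, so the conclusion $r_{n-2}\le 0$ is literally false there, and the paper's own contradiction for the case $r_{n-2}>0$ (which needs the sum $\sum_{j=1}^{n-2}r_j$ to be nonempty and strictly positive) silently breaks down as well. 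Since the paper allows $n\ge 2$ and Proposition \ref{proposition:finaltrick} invokes this lemma to conclude that $r_{n-2}<0$ or $r_{n-2}=0$, the case $n=2$, $d=3$ does require the separate treatment you sketch (the hypothesis then forces $r_0+2r_2=0$ and $r_1=r_2$), or else the lemma should be stated for $n\ge 3$ only.
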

\begin{proof}
    Suppose $r_{n-1} \ge 0$, we must have
    \[
    0 = \Sigma_{j=0}^nr_j \ge -2r_n +\Sigma_{j=1}^nr_j = \Sigma_{j=1}^{n-2}r_j + (r_{n-1}-r_n) > 0,
    \]
    which is a contraction. The same argument applies to $r_{n-2} > 0$ and $r_{n-2} = 0$.  
\end{proof}

\begin{proof}[Proof of proposition \ref{proposition:finaltrick}]
    When $r_0 + 2r_n < 0$, there is no need to prove. Thus, just assume $r_0 + 2r_n \ge 0$. 

    By lemma \ref{lemma:final}, we have $r_{n-2}<0$ or $r_{n-2} = 0$. 

    When $r_{n-2}<0$, recall that in the proof of corollary \ref{corollary:estimate}, we have $r_{n-m-2} + r_{m+1} \ge 0$ for all $\frac{n}{2}-1\le m\le n-3$. But here, we will further have $r_{1} + r_{n-2} > r_1 + 2r_{n-2} \ge 0$ where the latter inequality follows from proposition \ref{proposition:estimate}. Therefore, adding up all the inequalities $r_{n-m-2} + r_{m+1} \ge 0$ with the exception $r_1 + r_{n-2} > 0$ will give us $\Sigma_{j=1}^{n-2}r_j>0$. Since $r_0+r_{n-1}+r_n \ge r_0 + 2r_n \ge 0$ by hypothesis, we will get a contraction $0 = \Sigma_{j=0}^nr_j >0$. Therefore, $r_{n-2}<0$ will not happen. 

    When $r_{n-2} = 0$. By lemma \ref{lemma:final}, we would have $r_1 = .. = r_{n-2} = 0$. Therefore, $f$ can be written as
    \[
    f = c(x_0,..,x_{n-2}) + x_0q(x_{n-1},x_n) + x_nx_0l_1(x_0,..,x_{n-2}) + x_{n-1}x_0l_2(x_0,..,x_{n-2})
    \]
    where $c$, $q$ and $l_i$ are homogeneous polynomials of degree $3$, $2$ and $1$. After a linear change of coordinates in $x_{n-1}$ and $x_n$, we may assume $q = x_{n-1}x_n$ or $q = x_{n-1}^2$ according to $\rank(q)$. No matter which case it is, we can write
    \[
    f = x_nx_0l(x_0,..,x_{n-1}) + c(x_0,..,x_{n-1}). 
    \]

    Substitute $l(x_0,..,x_{n-1})$ with $x_1$, we get
    \[
    \sigma f = x_nx_0x_1 + c(x_0,..,x_{n-1}).
    \]

    Now let $\vec{r^\prime} = (1,1,0,..,0,-2)$, we have $\sigma f \in M_{\ge 0}(\vec{r^\prime})$. 
\end{proof}

Therefore, theorem \ref{theorem:main2} will be a direct consequence of lemma \ref{lemma:final}, proposition \ref{proposition:final} and proposition \ref{proposition:finaltrick}. 

\bibliography{reference}
\end{document}